\title{Prime Filters in MV-algebras}
\author{Colin G. Bailey}
\address{School of Mathematics,  Statistics \& Operations Research\\
Victoria University of Wellington\\
Wellington, New Zealand\\
}
\email{Colin.Bailey@vuw.ac.nz}
\subjclass{06D35}
\keywords{MV-algebras, prime filters}
\date{\today}
\def\one{{\mathbf 1}}
\def\eqcl[#1]{\pmb{[}#1\pmb{]}}
\def\leftGen{{[\kern-1.1pt[}}
\def\rightGen{{]\kern-1.1pt]}}
\let\rsf\mathscr
\providecommand{\meet}{\mathbin{\wedge}}
\providecommand{\join}{\mathbin{\vee}}
     \def\restrict{\hbox{\rm\kern0.166em\accent"12\kern-0.536em$\vert$\kern0.3em}}%
     \def\restrict{\upharpoonright}%
\def\twoSet#1#2{\left\{%
\vphantom{#2}#1\thinspace\right|\nolinebreak[3]\left.%
  #2%
  \vphantom{#1}%
  \right\}%
}
\def\oneSet#1{\left\lbrace#1\right\rbrace}
\newif\if@nstr
\def\setstrfalse{\let\if@nstr=\iffalse}
\def\setstrtrue{\let\if@nstr=\iftrue}
\def\@nstr #1#2{
\def\@@nstr ##1#1##2##3\@@nstr{\ifx
\@nstr ##2\setstrfalse \else \setstrtrue \fi }
\@@nstr #2#1\@nstr \@@nstr}
\def\@separate#1|#2@{\setFront{#1}\setBack{#2}}
\def\lb#1\rb{\@nstr|{#1} \if@nstr \@separate#1 @ \twoSet{\@setFront}{\@setBack}%
\else \@separate |{#1 }@ \oneSet{\@setBack}\fi%
}
\def\setFront#1{\def\@setFront{#1}}
\def\setBack#1{\def\@setBack{#1}}
\def\Set#1{\lb{#1}\rb}
\def\oneBrk#1{\left\langle#1\right\rangle}
\def\twoBrk#1#2{\left\langle%
\vphantom{#2}#1\thinspace\right|\nolinebreak[3]\left.%
  #2%
  \vphantom{#1}%
  \right\rangle%
}
\def\brk<#1>{\@nstr|{#1} \if@nstr \@separate#1 @ \twoBrk{\@setFront}{\@setBack}%
\else \@separate |{#1 }@ \oneBrk{\@setBack}\fi%
}
\def\thmref#1{\normalfont{theorem}~\ref{#1}}
\def\propref#1{\normalfont{proposition}~\ref{#1}}
\theoremstyle{plain}
\newtheorem{thm}{Theorem}[section]
\newtheorem{lem}[thm]{Lemma}
\newtheorem{cor}[thm]{Corollary}
\newtheorem{prop}[thm]{Proposition}
\newtheorem{defn}[thm]{Definition}
\theoremstyle{remark}
{}
{}
{}
{}
\newcommand{\Q}{{\mathbbm{Q}}}
\newcommand{\Q}{{\mathbb{Q}}}
\begin{document}
\begin{abstract} 	
   In this document we consider the prime spectrum of 
   an MV-algebra with certain natural operations. These are used to 
   show connections between the classes of prime lattice filters and 
   prime implication filters. 
\end{abstract}
\maketitle

\section{Introduction}
In the representation theorems for MV-algebras of Martinez 
(\cite{Martinez:one, Martinez:two}) and Martinez \& Priestley 
(\cite{MartPreist:one}) much use was made of 
the function on filters: $a\mapsto \rsf F_{a}=\Set{z | z\to a\notin\rsf F}$. For 
a fixed lattice filter $\rsf F$ the set $\Set{\rsf F_{a} | 
a\in\mathcal L}$ was shown to be linearly ordered. We 
begin by analysing this result further using the kernel of a filter 
(defined below) and (in a later paper) extend our analysis to define 
natural operations on filters that result in MV-algebras. This 
process generalizes the notion of cuts in $[0, 1]_{\Q}$. The 
operations defined are closely related to the operations on filters defined in 
\cite{Martinez:one, Martinez:two}.

As is usual when studying filters in MV-algebras we need to be aware 
of the type of filter -- is it merely an order filter,  or a lattice 
filter or an implication filter? Most of what follows explores an 
interaction between 
order filters and implication filters. 

\begin{defn}
    Let $\rsf F$ be any order filter.
    The set 
    $$
    \rsf F_{a}=\Set{z | z\to a\notin\rsf F}
    $$
    is called the \emph{subordinate of $\rsf F$ at $a$}.
\end{defn}

\begin{prop}
    Let $\rsf F$ be any order filter and $a\notin\rsf F$.
Then $\rsf F_{a}$ is 
    also an order filter that does not contain $a$.
    
    $\rsf F_{a}$ is prime if $\rsf F$ is meet-closed,  and 
    $\rsf F_{a}$ is meet-closed if $\rsf F$ is prime. 
\end{prop}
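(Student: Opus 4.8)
The plan is to reduce all three assertions to two standard MV-algebra identities relating implication to the lattice operations, namely
$$(x\join y)\to a = (x\to a)\meet(y\to a)\qquad\text{and}\qquad (x\meet y)\to a = (x\to a)\join(y\to a),$$
together with the elementary fact that $z\mapsto z\to a$ is antitone in $z$. Here I take \emph{prime} to mean $x\join y\in\rsf F$ implies $x\in\rsf F$ or $y\in\rsf F$, and \emph{meet-closed} to mean closure under $\meet$.

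First I would establish that $\rsf F_{a}$ is an order filter avoiding $a$. Antitonicity gives that $z\le w$ implies $w\to a\le z\to a$; hence if $z\in\rsf F_{a}$ (so $z\to a\notin\rsf F$) and $z\le w$, then $w\to a\notin\rsf F$ as well, since otherwise upward closure of $\rsf F$ would force $z\to a\in\rsf F$. Thus $\rsf F_{a}$ is upward closed. It is nonempty precisely because $1\to a=a\notin\rsf F$ by hypothesis, so $1\in\rsf F_{a}$; and $a\notin\rsf F_{a}$ because $a\to a=\one\in\rsf F$. Note this is the only point at which the hypothesis $a\notin\rsf F$ is consumed.

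For primeness when $\rsf F$ is meet-closed I would argue by contraposition. Suppose $x\notin\rsf F_{a}$ and $y\notin\rsf F_{a}$, i.e. $x\to a\in\rsf F$ and $y\to a\in\rsf F$. Since $\rsf F$ is closed under $\meet$ we get $(x\to a)\meet(y\to a)\in\rsf F$, which by the first identity equals $(x\join y)\to a$; hence $x\join y\notin\rsf F_{a}$. Equivalently, $x\join y\in\rsf F_{a}$ forces $x\in\rsf F_{a}$ or $y\in\rsf F_{a}$. For meet-closure when $\rsf F$ is prime, suppose $x,y\in\rsf F_{a}$, so $x\to a\notin\rsf F$ and $y\to a\notin\rsf F$. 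By the second identity $(x\meet y)\to a=(x\to a)\join(y\to a)$; were this in $\rsf F$, primeness of $\rsf F$ would place one of $x\to a$, $y\to a$ in $\rsf F$, a contradiction. Hence $(x\meet y)\to a\notin\rsf F$, that is $x\meet y\in\rsf F_{a}$.

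The bulk of the work is purely bookkeeping; the genuine content, and the step I would be most careful about, is the verification (or citation) of the second identity $(x\meet y)\to a=(x\to a)\join(y\to a)$. This equality is special to MV-algebras and typically fails in general residuated lattices, so the meet-closure half of the proposition rests essentially on the MV structure rather than on order-theoretic generalities, whereas the primeness half needs only the join-over-implication identity.
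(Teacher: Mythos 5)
Your proof is correct and follows essentially the same route as the paper: antitonicity of $z\mapsto z\to a$ for upward closure, $1\to a=a$ and $a\to a=\one$ for the endpoint facts, and the two distribution identities $(x\join y)\to a=(x\to a)\meet(y\to a)$ and $(x\meet y)\to a=(x\to a)\join(y\to a)$ for the primeness and meet-closure claims respectively. The only difference is cosmetic (you phrase the primeness step contrapositively), and your closing remark correctly identifies the second identity as the genuinely MV-specific ingredient.
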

\begin{proof}
    $1\to a=a\notin\rsf F$ so that $1\in\rsf F_{a}$.
    
    If $z\geq w\in\rsf F_{a}$ then $z\to a\le w\to a\notin\rsf F$ and 
    so we must have $z\to a\notin\rsf F$.
    
    As $a\to a=1\in\rsf F$ we see that $a\notin\rsf F_{a}$.
    
    Suppose that $\rsf F$ is meet-closed. 
    If $x\join y\in\rsf F_{a}$ then $(x\join y)\to a= (x\to 
    a)\meet(y\to a)\notin\rsf F$. Hence at least one of $x\to a$ and 
    $y\to a$ cannot be in $\rsf F$ -- as it is meet-closed. Hence at 
    least one of $x$ or $y$ is in $\rsf F_{a}$.

    Suppose that $\rsf F$ is prime.
    If $x,y\in\rsf F_{a}$ then $(x\meet y)\to a= (x\to a)\join(y\to 
    a)$ is in $\rsf F$ iff $x\to a\in\rsf F$ or $y\to a\in\rsf F$ -- 
    as $\rsf F$ is prime.
    As neither of these is true, $x\meet y\in\rsf F_{a}$.
\end{proof}

We note that if $a\in\rsf F$ then $\rsf F_{a}=\emptyset$ as we have 
$z\to a\geq a\in\rsf F$ for all $z\in\mathcal L$.

\begin{prop}
    Let $a\le b$. Then $\rsf F_{b}\subseteq\rsf F_{a}$. 
\end{prop}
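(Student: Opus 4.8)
The plan is to prove the inclusion pointwise: I would take an arbitrary $z \in \rsf F_{b}$ and show that $z \in \rsf F_{a}$. Unwinding the definition of the subordinate, membership $z \in \rsf F_{b}$ says exactly that $z \to b \notin \rsf F$, while the goal $z \in \rsf F_{a}$ is the assertion $z \to a \notin \rsf F$. So the entire content is to pass from ``$z \to b \notin \rsf F$'' to ``$z \to a \notin \rsf F$'', using the hypothesis $a \le b$.

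The single ingredient I would invoke is the monotonicity of implication in its consequent: in any MV-algebra, $a \le b$ forces $z \to a \le z \to b$. This is the companion to the antitonicity-in-the-antecedent fact $z \ge w \Rightarrow (z \to a) \le (w \to a)$ that was already used in the proof of the previous proposition, and it follows from the order-theoretic properties of the residuum. Once this inequality is in hand, the conclusion is immediate by the upward closure of $\rsf F$.

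Concretely, I would argue by contraposition. Suppose, for contradiction, that $z \to a \in \rsf F$. Since $\rsf F$ is an order filter it is closed upward, and $z \to a \le z \to b$, so $z \to b \in \rsf F$; but this contradicts $z \in \rsf F_{b}$. Hence $z \to a \notin \rsf F$, i.e.\ $z \in \rsf F_{a}$, which gives $\rsf F_{b} \subseteq \rsf F_{a}$.

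I do not expect any real obstacle here: there is no primeness or meet-closure hypothesis to manage, so the argument rests only on the order filter being upward closed together with the monotonicity of $\to$ in the second coordinate. The only point requiring a moment's care is making sure the inequality goes the right way, namely $z \to a \le z \to b$ rather than the reverse, so that upward closure transfers membership from $z \to a$ to $z \to b$ and not vice versa.
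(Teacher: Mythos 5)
Your argument is correct and is exactly the paper's: both rest on the monotonicity $z\to a\le z\to b$ together with the upward closure of the order filter $\rsf F$, so that $z\to b\notin\rsf F$ forces $z\to a\notin\rsf F$. The inequality is oriented the right way, and no further hypotheses are needed.
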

\begin{proof}
    As $z\to a\le z\to b$ for any $z$, if $z\to b\notin\rsf F$ then 
    neither is $z\to a$.
\end{proof}

This tells us that  the set $\Set{\rsf F_{a}| a\notin\rsf F}$ is 
directed up and down as if $a,b\notin\rsf F$ then 
$\rsf F_{a\join b}\subseteq \rsf F_{a}\cap\rsf F_{b}$ and 
$\rsf F_{a}\cup\rsf F_{b}\subseteq\rsf F_{a\meet b}$.

The proposition also tells us that there is a largest filter in the
family $\Set{\rsf 
F_{a} | a\notin\rsf F}$, namely $\rsf F_{0}$ which we henceforth 
denote by $\rsf F^{+}$. 

\begin{prop}\label{prop:subOrder}
    Let $\rsf F\subseteq\rsf G$ be two order filters with 
    $a\notin\rsf G$. Then
    $$
    \rsf G_{a}\subseteq\rsf F_{a}.
    $$
\end{prop}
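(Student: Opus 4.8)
The plan is to prove the inclusion directly from the definition of the subordinate, unwinding what membership on each side amounts to. The key observation is that the defining condition $z\to a\notin\rsf F$ contains a negation, so a containment between the two filters will yield the \emph{reverse} containment between their subordinates — exactly the same phenomenon seen in the earlier proposition, where monotonicity of $\to$ in its second argument turned $a\le b$ into $\rsf F_{b}\subseteq\rsf F_{a}$.

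Concretely, I would fix an arbitrary $z\in\rsf G_{a}$, so that by definition $z\to a\notin\rsf G$. Then I would invoke the hypothesis $\rsf F\subseteq\rsf G$ in contrapositive form: since every element of $\rsf F$ lies in $\rsf G$, anything lying outside $\rsf G$ must lie outside $\rsf F$. Applying this to the element $z\to a$ gives $z\to a\notin\rsf F$, which is precisely the assertion $z\in\rsf F_{a}$. As $z$ was arbitrary, this establishes $\rsf G_{a}\subseteq\rsf F_{a}$.

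I do not expect any genuine obstacle; the statement is a one-line consequence of the set-theoretic containment combined with the negation built into the subordinate. The only point demanding care is the bookkeeping of the direction of the inclusion — one is tempted to read off $\rsf F_{a}\subseteq\rsf G_{a}$ from $\rsf F\subseteq\rsf G$, and it is precisely the negation in the definition that flips it. Note also that no structural hypothesis on the filters (meet-closed, prime) is required, and the condition $a\notin\rsf G$ is used only to guarantee $\rsf G_{a}$ is nonempty in the spirit of the preceding remark; it forces $a\notin\rsf F$ as well, so both subordinates are the ``genuine'' order filters supplied by the earlier proposition.
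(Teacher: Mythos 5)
Your proof is correct and is exactly the paper's argument: take $z\in\rsf G_{a}$, note $z\to a\notin\rsf G$ implies $z\to a\notin\rsf F$ since $\rsf F\subseteq\rsf G$, and conclude $z\in\rsf F_{a}$. The surrounding remarks about the direction-reversal and about $a\notin\rsf F$ following from $a\notin\rsf G$ are accurate but not needed.
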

\begin{proof}
    As $z\to a\notin\rsf G$ implies $z\to a\notin\rsf F$.
\end{proof}

\begin{prop}
    Let $\rsf F$ be any order filter and $a\notin\rsf F$. Then
    $$
    (\rsf F_{a})_{a}=\rsf F.
    $$
\end{prop}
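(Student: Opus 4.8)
The plan is to collapse the iterated subordinate into a single membership test and then to recognize the resulting inner expression as a lattice join. First I would unwind the definition twice. Because $a\notin\rsf F$, the preceding proposition tells me that $\rsf F_{a}$ is itself an order filter with $a\notin\rsf F_{a}$, so $(\rsf F_{a})_{a}$ is legitimately defined. Now $z\in(\rsf F_{a})_{a}$ means $z\to a\notin\rsf F_{a}$, and negating the defining condition of $\rsf F_{a}$ shows that $z\to a\notin\rsf F_{a}$ holds exactly when $(z\to a)\to a\in\rsf F$. Thus
$$
(\rsf F_{a})_{a}=\Set{z | (z\to a)\to a\in\rsf F}.
$$

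Next I would rewrite the inner term via the standard MV-algebra identity $(z\to a)\to a=z\join a$ (the Chang join $\neg(\neg z\oplus a)\oplus a$), reducing the goal to the set equality $\Set{z | z\join a\in\rsf F}=\rsf F$. One inclusion is then immediate and uses only that $\rsf F$ is an up-set: if $z\in\rsf F$ then $z\le z\join a$ forces $z\join a\in\rsf F$, giving $\rsf F\subseteq(\rsf F_{a})_{a}$.

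The reverse inclusion is where I expect the real content — and the main obstacle — to sit: I must pass from $z\join a\in\rsf F$ back to $z\in\rsf F$. This is precisely where the hypothesis $a\notin\rsf F$ has to be deployed, and it has to be combined with a primality splitting: if $\rsf F$ is prime, then $z\join a\in\rsf F$ yields $z\in\rsf F$ or $a\in\rsf F$, and $a\notin\rsf F$ eliminates the second case. I would be careful here, because bare order-filterhood does not suffice for this direction; for instance, in the four-element Boolean MV-algebra $\mathbf 2\times\mathbf 2$ the up-set $\rsf F=\{(1,1)\}$ with $a=(0,1)$ satisfies $(1,0)\join a=(1,1)\in\rsf F$ while $(1,0)\notin\rsf F$. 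I would therefore expect the operative hypothesis in this direction to be that $\rsf F$ is prime, primality being exactly the property that makes the two subordinate passes mutually inverse.
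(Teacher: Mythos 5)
Your proof is correct and follows essentially the same route as the paper: unwind the definition twice, apply the identity $(z\to a)\to a = z\join a$, and use primality together with $a\notin\rsf F$ to recover $z\in\rsf F$ from $z\join a\in\rsf F$. Your observation that bare order-filterhood is insufficient for the reverse inclusion and that primality is the operative hypothesis is vindicated by the paper's own proof, which concludes with the words ``as $a\notin\rsf F$ and $\rsf F$ is prime'' even though the statement only assumes an order filter.
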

\begin{proof}
    \begin{align*}
        z\in(\rsf F_{a})_{a} & \iff z\to a\notin\rsf F_{a}  \\
         & \iff (z\to a)\to a= z\join a\in\rsf F  \\
         & \iff z\in\rsf F
    \end{align*}
    as $a\notin\rsf F$ and $\rsf F$ is prime.
\end{proof}

As a special case we have the following facts about $\rsf 
F\mapsto\rsf F^{+}$.

\begin{cor}
    Let $\rsf F\subseteq\rsf G$ be two order filters. Then
    \begin{enumerate}
        \item $\rsf F^{+}$ is a prime lattice filter.
        \item $\rsf G^{+}\subseteq\rsf F^{+}$.
        \item $\rsf F^{++}=\rsf F$.
    \end{enumerate}
\end{cor}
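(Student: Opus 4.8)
The plan is to obtain all three items by specialising the three preceding propositions to the single value $a=\zero$, since $\rsf F^{+}$ is by definition $\rsf F_{\zero}$. Before anything else I would record the standing requirement that $\zero\notin\rsf F$ (equivalently, that $\rsf F$ is proper); otherwise $\rsf F_{\zero}=\emptyset$ by the remark following the first proposition, and there is nothing to prove. Note too that in item~(2) the hypothesis $\zero\notin\rsf G$ together with $\rsf F\subseteq\rsf G$ already forces $\zero\notin\rsf F$, so properness need only be assumed once. Granting this, each clause becomes a direct reading of an earlier result with $a$ replaced by $\zero$, and the real work lies in confirming that the hypotheses those propositions demand are in fact available.

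For item~(1) I would apply the proposition asserting that $\rsf F_{a}$ is an order filter not containing $a$, together with its two refinements: $\rsf F_{\zero}$ is prime because $\rsf F$ is meet-closed, and $\rsf F_{\zero}$ is meet-closed because $\rsf F$ is prime. Combining these, $\rsf F^{+}=\rsf F_{\zero}$ is a prime lattice filter. The point I would stress is that \emph{both} refinements are consumed simultaneously: to land in the class of prime lattice filters one must feed in a filter that is itself both prime and meet-closed. This is exactly the step I expect to be the main obstacle, and it is a genuine constraint rather than a formality. A merely prime order filter that fails to be meet-closed can have a non-prime $\rsf F^{+}$ (for instance $\Set{(x,y)\in[0,1]^{2} | x\ge\tfrac12 \text{ or } y\ge\tfrac12}$ has $\rsf F^{+}$ equal to the upper-right square, which is not prime). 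Consequently item~(1) is really a statement about prime lattice filters, and I would treat ``$\rsf F\subseteq\rsf G$ prime lattice filters'' as the operative hypothesis throughout the corollary.

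Item~(2) is the cleanest and needs nothing beyond properness: it is \propref{prop:subOrder} read at $a=\zero$. Since $\rsf F\subseteq\rsf G$ and $\zero\notin\rsf G$, that proposition yields $\rsf G_{\zero}\subseteq\rsf F_{\zero}$, that is, $\rsf G^{+}\subseteq\rsf F^{+}$. For item~(3) I would invoke the involution proposition $(\rsf F_{a})_{a}=\rsf F$ at $a=\zero$, giving $\rsf F^{++}=(\rsf F_{\zero})_{\zero}=\rsf F$; the only thing to check is that its hypotheses hold, namely $\zero\notin\rsf F$ and the primeness of $\rsf F$ that its proof used, both of which are part of the prime-lattice-filter hypothesis isolated above.

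Taken together, items~(1)--(3) say precisely that $\rsf F\mapsto\rsf F^{+}$ is an order-reversing involution of the prime lattice filters, and that is the structural reading I would foreground in the write-up rather than presenting three unrelated computations. In short, the proof is a bookkeeping exercise in matching hypotheses to the three prior propositions; the only conceptual content is recognising that primeness and meet-closure of $\rsf F$ are jointly indispensable for part~(1), which fixes the class on which the map is well-behaved.
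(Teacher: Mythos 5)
Your proof is correct and is exactly the route the paper takes: the corollary is offered with no separate argument as the special case $a=\zero$ of the three preceding propositions, which is precisely your reading. Your extra observation is a genuine (and correct) sharpening of the statement rather than a deviation: as printed the corollary hypothesises only order filters, but items (1) and (3) require $\rsf F$ to be a proper prime \emph{lattice} filter --- the first proposition needs both meet-closure and primeness of $\rsf F$ to make $\rsf F^{+}$ a prime lattice filter, the involution proposition's proof silently uses primeness of $\rsf F$ --- and your example in $[0,1]^{2}$ of a prime, non-meet-closed order filter whose $\rsf F^{+}$ (the open upper-right square) fails to be prime correctly witnesses that this is a real restriction, not a removable artifact of the proofs.
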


It is also worthwhile noting that 
$\rsf F^{+}$ is equal to $(\mathcal L\setminus\rsf F)^{*}$ where 
$X^{*}=\Set{\lnot x | x\in X}$, and that $\rsf F$ a prime filter
implies $\mathcal L\setminus\rsf F$ is a prime ideal.

At the lower limit of all the subordinates of $\rsf F$ we have its 
kernel.
\begin{defn}
    The \emph{kernel} of an order filter $\rsf F$ is the set
    $$
    \mathcal{K}(\rsf F)=\Set{z | \forall a\notin\rsf F\ z\to 
    a\notin\rsf F}.
    $$
\end{defn}
Clearly $\mathcal K(\rsf F)=\bigcap_{a\notin\rsf F}\rsf F_{a}$. 

The kernel (in its dual form, for ideals) can be also seen in \cite{MainBk}. 

\section{Properties of $\mathcal K$}
\begin{thm}
    If $\rsf F$ is an order filter then $\mathcal K(\rsf F)$ 
    is an implication filter contained in $\rsf F$.
    
    If $\rsf F$ is a prime filter then so is $\mathcal K(\rsf F)$.
\end{thm}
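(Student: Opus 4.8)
The plan is to treat the two assertions separately, using throughout that $\mathcal K(\rsf F)=\bigcap_{a\notin\rsf F}\rsf F_{a}$ is an intersection of order filters and hence itself an order filter. First I would dispose of the easy containments. That $1\in\mathcal K(\rsf F)$ is immediate, since $1\to a=a\notin\rsf F$ for every $a\notin\rsf F$. For $\mathcal K(\rsf F)\subseteq\rsf F$ I would argue by contradiction: if $z\in\mathcal K(\rsf F)$ but $z\notin\rsf F$, then $z$ is itself an admissible choice of $a$ in the definition of the kernel, forcing $z\to z=1\notin\rsf F$, which is absurd.

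The substantive content of the first assertion is closure under modus ponens: given $x\in\mathcal K(\rsf F)$ and $x\to y\in\mathcal K(\rsf F)$, I must show $y\in\mathcal K(\rsf F)$, i.e. that $y\to a\notin\rsf F$ for every $a\notin\rsf F$. Fix such an $a$. Since $x\in\mathcal K(\rsf F)$, the element $b:=x\to a$ lies outside $\rsf F$; and since $x\to y\in\mathcal K(\rsf F)$, applying the defining property of the kernel at this \emph{new} point $b$ yields $(x\to y)\to(x\to a)\notin\rsf F$. The key is then the standard MV-algebra inequality $y\to a\le(x\to y)\to(x\to a)$ (a form of transitivity of $\to$, coming from the residuation law $x\odot(x\to y)\le y$). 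As $\rsf F$ is upward closed and the larger element $(x\to y)\to(x\to a)$ is not in $\rsf F$, neither is $y\to a$; since $a$ was arbitrary, $y\in\mathcal K(\rsf F)$. I expect this to be the main obstacle: the trick is to apply the kernel condition not at $a$ but at the auxiliary point $x\to a$, and to have the right transitivity inequality at hand.

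For the second assertion I would argue contrapositively, recalling that an implication filter is in particular meet-closed, so that once $\mathcal K(\rsf F)$ is known to be an implication filter it remains only to prove join-primeness. Suppose then that $x\notin\mathcal K(\rsf F)$ and $y\notin\mathcal K(\rsf F)$, witnessed by $a,b\notin\rsf F$ with $x\to a\in\rsf F$ and $y\to b\in\rsf F$. Put $c=a\join b$; since $\rsf F$ is prime its complement is an ideal, so $c\notin\rsf F$. Monotonicity of $\to$ in its second argument gives $x\to c\ge x\to a\in\rsf F$ and $y\to c\ge y\to b\in\rsf F$, so both $x\to c$ and $y\to c$ lie in $\rsf F$; as $\rsf F$ is meet-closed (being prime), $(x\join y)\to c=(x\to c)\meet(y\to c)\in\rsf F$. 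Hence $x\join y\notin\rsf F_{c}$, and a fortiori $x\join y\notin\mathcal K(\rsf F)$. This establishes join-primeness of $\mathcal K(\rsf F)$ and completes the argument.
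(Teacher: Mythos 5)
Your proof is correct and follows essentially the same route as the paper: the containment $\mathcal K(\rsf F)\subseteq\rsf F$ via $a\to a=1$, and the primeness argument via the witnesses $a,b$ and their join, are identical to the paper's. The only cosmetic difference is that the paper verifies closure under $\otimes$ using $(x\otimes y)\to a=x\to(y\to a)$ applied at the auxiliary point $y\to a$, whereas you verify modus ponens directly via the suffixing inequality $y\to a\le(x\to y)\to(x\to a)$ applied at the auxiliary point $x\to a$ --- the same trick in residuated form.
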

\begin{proof}
    If $x\in\mathcal K(\rsf F)$ and $x\le y$ then for any 
    $a\notin\rsf F$ we have $y\to a\le x\to a\notin\rsf F$ and so 
    $y\to a\notin\rsf F$. Hence $y\in\mathcal K(\rsf F)$.
    
    If $x, y\in\mathcal K(\rsf F)$ and $a\notin \rsf F$ then $y\to 
    a\notin\rsf F$ and so $(x\otimes y)\to a= x\to(y\to a)\notin\rsf 
    F$. Hence $\mathcal K(\rsf F)$ is $\otimes$-closed. Therefore it 
    is also closed under $\meet$. 
    
    If $a\notin \rsf F$ then $a\notin\rsf F_{a}$ and so 
    $a\notin\mathcal K(\rsf F)$. Hence $\mathcal K(\rsf 
    F)\subseteq\rsf F$.
    
    If $x,y\notin\mathcal K(\rsf F)$ then we have $a_{x}$ and $a_{y}$ 
    not in $\rsf F$ with $x\to a_{x}\in\rsf F$ and $y\to 
    a_{y}\in\rsf F$. Then, taking $a=a_{x}\join a_{y}$ we see that 
    $a\notin\rsf F$ (as $\rsf F$ is prime) and $x\to a\in\rsf F$ and 
    $y\to a\in\rsf F$. Therefore $(x\join y)\to a= (x\to a)\meet 
    (y\to a)\in\rsf F$ and so $x\join y\notin\mathcal K(\rsf F)$.
\end{proof}

\begin{prop}
    If $\rsf F$ is an implication filter then $\rsf F=\mathcal 
    K(\rsf F)$.
\end{prop}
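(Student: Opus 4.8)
The plan is to prove the two inclusions separately, since one of them is already available. First I would observe that every implication filter is in particular an order filter: closure under modus ponens forces upward closure, because if $x\in\rsf F$ and $x\le y$ then $x\to y=1\in\rsf F$, and modus ponens applied to $x$ and $x\to y$ yields $y\in\rsf F$. With this observation in place, the theorem just proved applies and gives $\mathcal K(\rsf F)\subseteq\rsf F$ for free. So the only genuine content of the proposition is the reverse inclusion $\rsf F\subseteq\mathcal K(\rsf F)$.

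To establish $\rsf F\subseteq\mathcal K(\rsf F)$, I would take an arbitrary $x\in\rsf F$ and verify the defining condition of the kernel directly, namely that $x\to a\notin\rsf F$ for every $a\notin\rsf F$. I would argue by contradiction: suppose there were some $a\notin\rsf F$ with $x\to a\in\rsf F$. Then $x\in\rsf F$ and $x\to a\in\rsf F$ together trigger modus ponens, the defining closure property of an implication filter, forcing $a\in\rsf F$ and contradicting the choice of $a$. Hence no such $a$ exists, $x$ satisfies the kernel condition, and $x\in\mathcal K(\rsf F)$.

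I do not expect any real obstacle here. The single point requiring attention is the remark that an implication filter is an order filter, since the earlier inclusion $\mathcal K(\rsf F)\subseteq\rsf F$ was established only for order filters; once that is noted, the remaining argument is immediate. The crux is simply the modus ponens closure, which is exactly what distinguishes implication filters from mere order filters, and it is precisely tailored to supply the inclusion that the kernel construction otherwise leaves open. In short, the proposition amounts to the statement that the kernel operator restricts to the identity on implication filters.
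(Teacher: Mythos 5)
Your argument is correct and is essentially the paper's own: the inclusion $\mathcal K(\rsf F)\subseteq\rsf F$ is taken from the preceding theorem, and the reverse inclusion is proved by the same modus ponens contradiction (assume $f\in\rsf F$, $a\notin\rsf F$, $f\to a\in\rsf F$, deduce $a\in\rsf F$). Your explicit remark that an implication filter is an order filter is a small but worthwhile addition of care that the paper leaves implicit.
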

\begin{proof}
    Let $f\in\rsf F$. If $f\notin\mathcal K(\rsf F)$. Then there is
some $a\notin\rsf 
    F$ with $f\to a\in\rsf F$.  As $f\to a\in\rsf F$ and $f\in\rsf F$ 
    we have $a\in\rsf F$ -- contradiction. Hence $f\in\mathcal K(\rsf 
    F)$. 
\end{proof}

\begin{thm}\label{thm:OTClosed}
    Let $\rsf F$ be any order filter. Then
    $$
    \mathcal K(\rsf F)=\Set{ z | \forall f\in\rsf F\ f\otimes 
    z\in\rsf F}.
    $$
\end{thm}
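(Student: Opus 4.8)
The plan is to prove the two set inclusions separately, and the only structural fact I need about $\rsf F$ is that it is upward closed (being an order filter); neither primeness nor meet-closure plays any role. The two MV-algebra identities that drive everything are the residuation adjunction $x\otimes y\le w \iff x\le y\to w$ and the divisibility identity $z\otimes(z\to a)=z\meet a$. From residuation, taking $w=f\otimes z$, one reads off $f\le z\to(f\otimes z)$; and from divisibility one gets $z\otimes(z\to a)=z\meet a\le a$. These are the two inequalities I will feed into upward closure.

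For the inclusion $\mathcal K(\rsf F)\subseteq\{z\mid \forall f\in\rsf F\ f\otimes z\in\rsf F\}$, I would argue by contraposition. Fix $z\in\mathcal K(\rsf F)$ and $f\in\rsf F$, and suppose $f\otimes z\notin\rsf F$. Then $f\otimes z$ is an element not in $\rsf F$, so the defining property of the kernel gives $z\to(f\otimes z)\notin\rsf F$. But $f\le z\to(f\otimes z)$ by residuation, and since $f\in\rsf F$ and $\rsf F$ is upward closed, this forces $z\to(f\otimes z)\in\rsf F$, a contradiction. Hence $f\otimes z\in\rsf F$ for every $f\in\rsf F$.

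For the reverse inclusion, fix $z$ with $f\otimes z\in\rsf F$ for all $f\in\rsf F$, and take any $a\notin\rsf F$; I must show $z\to a\notin\rsf F$. Suppose instead that $z\to a\in\rsf F$. Applying the hypothesis with $f=z\to a$ gives $(z\to a)\otimes z\in\rsf F$. By the divisibility identity, $(z\to a)\otimes z=z\meet a\le a$, so upward closure of $\rsf F$ yields $a\in\rsf F$, contradicting $a\notin\rsf F$. Thus $z\to a\notin\rsf F$ for all $a\notin\rsf F$, i.e.\ $z\in\mathcal K(\rsf F)$.

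The computations here are negligible; the only real work is recognising which two algebraic inequalities to use, so the main obstacle is purely that of selecting the right MV-algebra identities and orienting them correctly. If residuation and divisibility are not regarded as freely available, one would first have to derive $f\le z\to(f\otimes z)$ and $z\otimes(z\to a)\le a$ directly from the $\oplus$/$\lnot$ definitions of $\otimes$ and $\to$, which is elementary but slightly fiddly; once those two inequalities are in hand, the two contrapositive arguments above close immediately.
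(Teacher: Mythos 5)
Your proof is correct and is essentially the paper's argument in unwound form: the paper writes a single chain of equivalences driven by residuation ($f\otimes z\le a\iff f\le z\to a$) together with upward closure of $\rsf F$, and your two contrapositive arguments instantiate exactly the witnesses ($a=f\otimes z$ in one direction, $f=z\to a$ in the other) that make those biconditionals work. The only cosmetic difference is your appeal to divisibility for $(z\to a)\otimes z\le a$, which the paper obtains from residuation alone.
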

\begin{proof}
    \begin{align*}
	z\in\mathcal K(\rsf F) &\iff \forall a\notin\rsf F\ z\to 
	a\notin\rsf F\\
	&\iff \forall a\notin\rsf F\forall f\in\rsf F \lnot(f\le z\to 
	a)\\
	&\iff \forall a\notin\rsf F\forall f\in\rsf F\lnot(f\otimes 
	z\le a)\\
	&\iff\forall f\in\rsf F\lnot (f\otimes z\in\mathcal 
	L\setminus \rsf F)\\
	&\iff \forall f\in\rsf F f\otimes z\in\rsf F.
    \end{align*}
\end{proof}

It is immediate from this theorem that
\begin{cor}
    $$
	\mathcal K([p, 1])=\Set{ q | q\otimes p=p}.
    $$
\end{cor}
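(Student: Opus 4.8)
The plan is to read $\mathcal K([p,1])$ directly off \thmref{thm:OTClosed} and then verify the resulting description coincides with $\Set{q | q\otimes p=p}$ by a pair of inclusions. Since $[p,1]=\Set{f | f\geq p}$, the theorem specializes to
$$
\mathcal K([p,1])=\Set{ z | \forall f\geq p\ f\otimes z\geq p},
$$
so the whole task reduces to comparing this condition with the single equation $z\otimes p=p$.

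For the inclusion $\Set{q | q\otimes p=p}\subseteq\mathcal K([p,1])$, I would take $q$ with $q\otimes p=p$ and check the universal condition: given any $f\geq p$, monotonicity of $\otimes$ gives $f\otimes q\geq p\otimes q=q\otimes p=p$, so $q$ qualifies. This uses only that $\otimes$ is order-preserving together with commutativity to rewrite $p\otimes q$ as $q\otimes p$.

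For the reverse inclusion the key observation is that the universally quantified condition is already decided by its instance at $f=p$. Because $p\in[p,1]$, membership of $z$ in $\mathcal K([p,1])$ forces $z\otimes p\geq p$; on the other hand the standard bound $z\otimes p\leq 1\otimes p=p$ always holds, whence $z\otimes p=p$. Combining the two inclusions gives the claimed identity.

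I do not expect a genuine obstacle here, as the statement is an immediate specialization of \thmref{thm:OTClosed}. The only points needing care are the two basic MV-algebra facts being invoked — monotonicity (with commutativity) of $\otimes$ for the easy direction, and the upper bound $z\otimes p\leq p$ for the hard direction — and the remark that testing the universal condition at the single generator $f=p$ suffices.
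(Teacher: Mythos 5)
Your proof is correct and matches the paper's intent: the paper states the corollary as ``immediate from'' \thmref{thm:OTClosed}, and your argument is exactly the natural spelling-out of that specialization, with the reverse inclusion settled by testing at $f=p$ and the bound $z\otimes p\le 1\otimes p=p$.
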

We need to later use this for linearly ordered MV-algebras in which 
case further simplification occurs.

\begin{prop}
    If $\mathcal L$ is linearly ordered and $p>0$ then 
    $$
    \mathcal K([p, 1])=\Set1.
    $$
\end{prop}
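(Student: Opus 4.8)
The plan is to invoke the preceding corollary, which gives $\mathcal K([p,1])=\Set{q | q\otimes p=p}$, and then to show that when $\mathcal L$ is linearly ordered and $p>0$ this set collapses to $\Set1$. Since $1\otimes p=p$ always holds, only one inclusion requires work: I must show that $q\otimes p=p$ forces $q=1$.

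My first move would be to dualize the equation. Applying $\lnot$ to $q\otimes p=p$ and using the De Morgan law $\lnot(q\otimes p)=\lnot q\oplus\lnot p$ turns the hypothesis into $\lnot q\oplus\lnot p=\lnot p$. Writing $r=\lnot q$ and $s=\lnot p$, the assumption $p>0$ becomes $s<1$ and the desired conclusion $q=1$ becomes $r=0$. So the whole statement reduces to the additive claim: in a chain, $r\oplus s=s$ together with $s<1$ forces $r=0$.

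The structural fact I would exploit is the dichotomy available in any linearly ordered MV-algebra: for all $x,y$ either $x\otimes y=0$ or $x\oplus y=1$. This is immediate from comparability of $\lnot x$ and $y$, since $y\le\lnot x$ characterizes $x\otimes y=0$ while $\lnot x\le y$ characterizes $x\oplus y=1$. Applying this to $r$ and $s$: were $\lnot r\le s$ we would get $r\oplus s=1$, contradicting $r\oplus s=s<1$; hence $s<\lnot r$, i.e. $r\otimes s=0$. With $r\otimes s=0$ in hand, the equation $r\oplus s=s=0\oplus s$ is an instance of orthogonal addition, and the MV cancellation law (if $x\oplus z=y\oplus z$ with $x\otimes z=0=y\otimes z$ then $x=y$) cancels $s$ to give $r=0$, equivalently $q=1$. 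Equivalently, one can finish with the single identity $(r\oplus s)\ominus s=r$, valid whenever $r\otimes s=0$, which yields $r=(r\oplus s)\ominus s=s\ominus s=0$ directly.

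I expect the genuine obstacle to be this last reduction, not the bookkeeping: in a non-Archimedean chain $s$ may lie infinitesimally below $1$ while a fixed positive $r$ can be added to it many times without reaching $1$, so one cannot conclude $r=0$ by iterating $\oplus$ or by naive strict monotonicity of addition. The chain dichotomy is exactly what circumvents this, forcing the no-overflow regime $r\otimes s=0$ in which cancellation is legitimate. All the identities used — De Morgan, the zero/unit characterizations of $\otimes$ and $\oplus$, and cancellation — are standard and available from the background theory.
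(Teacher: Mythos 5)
Your proof is correct, and it is in essence the De Morgan dual of the paper's argument rather than a repetition of it. The paper likewise starts from the corollary $\mathcal K([p,1])=\{q\mid q\otimes p=p\}$ and shows that $q<1$ forces $q\otimes p<p$: it dismisses the case $q\le\lnot p$ (where $q\otimes p=0<p$), and for $\lnot p<q<1$ it observes that $r\mapsto p\to r$ maps $[0,p]$ onto $[\lnot p,1]$, writes $q=p\to r$ with $r<p$, and concludes $q\otimes p=p\otimes(p\to r)=r<p$. Your dichotomy ``$r\otimes s=0$ or $r\oplus s=1$'' is exactly the paper's comparison of $q$ with $\lnot p$ in disguise, and your finishing identity $(r\oplus s)\ominus s=r$ (valid when $r\otimes s=0$) is, under the substitution $r=\lnot q$, $s=\lnot p$, the same divisibility identity $p\otimes(p\to r)=p\meet r$ that the paper computes with; so both proofs consume linearity at the same single point. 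The differences are presentational: the paper's version is purely equational and never needs to invoke a cancellation law, while yours makes explicit why a naive Archimedean argument (iterating $\oplus$) would fail in a non-Archimedean chain and why the dichotomy is the correct substitute. One caution: MV cancellation is a quasi-identity, not an identity, so it should be derived---as your final sentence in effect does---from the identity $(x\oplus y)\ominus y=x\meet\lnot y$ together with $x\le\lnot y$, rather than cited as a primitive law.
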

\begin{proof}
    Let $\lnot p<q<1$. We will show that $q\otimes p<p$. Obviously 
    there is no need to consider $q\le\lnot p$.
    
    Consider the set $p\to[0, p]= \Set{p\to r | r\le p}= [\lnot p, 1]$.
    This contains $q$ and so there is an $r\le p$ with $p\to r=q$. As 
    $q<1$ we know $r<p$. But now
    $r= p\otimes(p\to r)= p\otimes q< p$. 
\end{proof}

$\mathcal K(\rsf F)$ has a very special property with respect to quotients.
For any implication filter $Q$ we let $\eta_{Q}\colon\mathcal 
L\to\mathcal L/Q$ denote the canonical epimorphism, and
$X/Q$ the image of any subset of $\mathcal L$.
\begin{thm}\label{thm:inclK}
    Let $\rsf F$ be any order filter, and $\rsf P$ any 
    implication filter. Then
    $$
    \eta_{P}^{-1}[\rsf F/P]=\rsf F \text{ iff }P\subseteq\mathcal 
    K(\rsf F).
    $$
\end{thm}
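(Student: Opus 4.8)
The plan is to reduce the stated biconditional to a single inclusion and then treat each direction using \thmref{thm:OTClosed}. Since $\rsf F/P=\eta_{P}[\rsf F]$, every $f\in\rsf F$ satisfies $\eta_{P}(f)\in\rsf F/P$, so $\rsf F\subseteq\eta_{P}^{-1}[\rsf F/P]$ holds unconditionally; hence the equation $\eta_{P}^{-1}[\rsf F/P]=\rsf F$ is equivalent to the reverse inclusion $\eta_{P}^{-1}[\rsf F/P]\subseteq\rsf F$, i.e.\ to $\rsf F$ being saturated under the congruence induced by $P$. Throughout I will use that $\eta_{P}(z)=\eta_{P}(w)$ holds exactly when both $z\to w$ and $w\to z$ lie in $P$, and in particular that $\eta_{P}(p)=1$ for every $p\in P$.

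For the direction $\eta_{P}^{-1}[\rsf F/P]=\rsf F\Rightarrow P\subseteq\mathcal K(\rsf F)$, I would fix $p\in P$ and $f\in\rsf F$ and show $f\otimes p\in\rsf F$; then \thmref{thm:OTClosed} immediately yields $p\in\mathcal K(\rsf F)$. Since $\eta_{P}$ is a homomorphism and $\eta_{P}(p)=1$, we have $\eta_{P}(f\otimes p)=\eta_{P}(f)\otimes 1=\eta_{P}(f)\in\rsf F/P$, so $f\otimes p$ lies in $\eta_{P}^{-1}[\rsf F/P]$, which by hypothesis equals $\rsf F$. This gives $f\otimes p\in\rsf F$ for every $f\in\rsf F$, as required.

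For the converse I would assume $P\subseteq\mathcal K(\rsf F)$ and take any $z\in\eta_{P}^{-1}[\rsf F/P]$, so that $\eta_{P}(z)=\eta_{P}(f)$ for some $f\in\rsf F$. This forces $f\to z\in P\subseteq\mathcal K(\rsf F)$, so \thmref{thm:OTClosed}, applied with the filter element $f$, gives $f\otimes(f\to z)\in\rsf F$. Invoking the MV-identity $f\otimes(f\to z)=f\meet z$ and the upward closure of the order filter $\rsf F$ from $f\meet z\le z$, I conclude $z\in\rsf F$, establishing $\eta_{P}^{-1}[\rsf F/P]\subseteq\rsf F$.

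The main obstacle is bookkeeping rather than depth: I must translate membership in the saturation $\eta_{P}^{-1}[\rsf F/P]$ into the two implication conditions $f\to z,\ z\to f\in P$ that define the quotient congruence, and then select the correct one, namely $f\to z\in P$, so that the identity $f\otimes(f\to z)=f\meet z$ produces an element of $\rsf F$ lying below $z$. Everything else reduces to the characterization of $\mathcal K$ in \thmref{thm:OTClosed} together with the elementary facts that $\eta_{P}$ is a homomorphism collapsing $P$ to $1$ and that $\rsf F$ is upward closed.
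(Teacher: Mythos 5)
Your proof is correct, and the forward direction is exactly the paper's argument: map $f\otimes p$ through $\eta_P$, use $\eta_P(p)=1$ to land back in $\rsf F/P$, pull back to $\rsf F$, and invoke \thmref{thm:OTClosed}. The converse takes a genuinely different (and arguably cleaner) route. The paper argues by contradiction through the subordinate machinery: from $h\in\eta_P^{-1}[\rsf F/P]\setminus\rsf F$ it gets $f\to h\in\mathcal K(\rsf F)\subseteq\rsf F_h$, hence $(f\to h)\to h=f\join h\notin\rsf F$, contradicting $f\le f\join h$ with $f\in\rsf F$. You instead argue directly: $f\to z\in P\subseteq\mathcal K(\rsf F)$ and \thmref{thm:OTClosed} give $f\otimes(f\to z)\in\rsf F$, and the divisibility identity $f\otimes(f\to z)=f\meet z\le z$ plus upward closure of the order filter finishes it. The two are near-dual uses of the same data (the paper goes up through $f\join h$, you go down through $f\meet z$), but yours has the advantage of running both directions through the single characterization in \thmref{thm:OTClosed} and needing only that $\rsf F$ is an order filter, with no appeal to the subordinates $\rsf F_a$; the divisibility identity you use is standard for MV-algebras and is in fact used elsewhere in the paper. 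No gaps.
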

\begin{proof}
    Suppose that $\eta_{P}^{-1}[\rsf F/P]=\rsf F$ and $f\in\rsf F$, 
    $p\in P$. Then
    $\eta_{P}(p\otimes x)=\eta_{P}(x)\otimes \eta_{P}(f)= 
    \eta_{P}(f)\in\rsf F/P$. Hence $p\otimes x\in\eta_{P}^{-1}[\rsf 
    F/P]=\rsf F$. By the \thmref{thm:OTClosed} we have $P\subseteq\mathcal 
    K(\rsf F)$. 
    
    Conversely, suppose $P\subseteq\mathcal K(\rsf F)$ is an 
    implication filter. Let $h\in\eta_{P}^{-1}[\rsf F/P]$. Then there 
    is some $f\in\rsf F$ with $f\to h\in P$ and $h\to f\in P$. Thus 
    $f\to h\in\mathcal K(\rsf F)$.  If $h\notin\rsf F$ then
    $f\to h\in\rsf F_{h}$ and so 
    $(f\to h)\to h= f\join h\notin\rsf F$ -- which is absurd as 
    $f\in\rsf F$. Thus $\eta_{P}^{-1}[\rsf F/P]\subseteq\rsf F$. The 
    other direction is immediate.
\end{proof}

\section{Interactions between $\mathcal K$ and subordination}
We begin by computing the kernel of $\rsf F^{+}$. In what follows we 
are primarily interested in prime filters,  although some results 
hold more generally.

\begin{prop}\label{prop:filPlus}
    Let $\rsf F$ be any order filter. Then
    $$
    \mathcal K(\rsf F)=\mathcal K(\rsf F^{+}).
    $$
\end{prop}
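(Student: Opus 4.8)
The plan is to compute $\mathcal K(\rsf F^{+})$ directly from the $\otimes$-characterization in \thmref{thm:OTClosed} and then to translate its defining condition back into a statement about $\rsf F$ using the involution $\lnot$. First I would dispose of the degenerate case: if $0\in\rsf F$ then $\rsf F=\mathcal L$, so $\rsf F^{+}=\emptyset$ and both kernels are all of $\mathcal L$, and the identity holds trivially. Henceforth I assume $0\notin\rsf F$, so that $\rsf F^{+}=\rsf F_{0}$ is a genuine order filter by the first proposition of the paper, and \thmref{thm:OTClosed} is applicable to it.

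Applying \thmref{thm:OTClosed} to $\rsf F^{+}$ gives
\[
\mathcal K(\rsf F^{+})=\{\,z\mid \forall f\in\rsf F^{+}\ f\otimes z\in\rsf F^{+}\,\}.
\]
The heart of the argument is to re-express both membership statements ``$f\in\rsf F^{+}$'' and ``$f\otimes z\in\rsf F^{+}$'' purely in terms of $\rsf F$. Since $f\in\rsf F^{+}=\rsf F_{0}$ means exactly $\lnot f\notin\rsf F$, I would put $b=\lnot f$; as $f$ ranges over $\rsf F^{+}$ the element $b$ ranges bijectively, via the involution, over $\mathcal L\setminus\rsf F$. For the second statement I would use the MV-identities $\lnot(f\otimes z)=\lnot f\oplus\lnot z$ and $z\to b=\lnot z\oplus b$ together with commutativity of $\oplus$ to compute $\lnot(f\otimes z)=\lnot z\oplus b=z\to b$. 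Hence $f\otimes z\in\rsf F^{+}$ iff $z\to b\notin\rsf F$, which by the definition of the subordinate is iff $z\in\rsf F_{b}$.

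Putting these two translations together, the condition ``$\forall f\in\rsf F^{+}\ f\otimes z\in\rsf F^{+}$'' becomes ``$\forall b\notin\rsf F\ z\in\rsf F_{b}$'', and this is precisely $z\in\bigcap_{b\notin\rsf F}\rsf F_{b}=\mathcal K(\rsf F)$. Thus $\mathcal K(\rsf F^{+})=\mathcal K(\rsf F)$, and notably no primeness or meet-closure is used, which matches the stated generality (``any order filter'').

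I expect the only real obstacle to be getting the chain of MV-identities exactly right, in particular recognizing $\lnot(f\otimes z)$ as $z\to\lnot f$ and then correctly reindexing the universal quantifier over $\rsf F^{+}$ as one over the complement $\mathcal L\setminus\rsf F$; everything else is bookkeeping. As a cross-check, and an alternative route, one could instead show that an implication filter $P$ satisfies $P\subseteq\mathcal K(\rsf F)$ iff $P\subseteq\mathcal K(\rsf F^{+})$ by feeding both kernels into \thmref{thm:inclK} and then taking $P$ to be each kernel in turn (each being an implication filter); but the direct computation above appears shorter and avoids analysing how subordination interacts with quotients.
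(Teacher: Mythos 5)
Your proposal is correct and is essentially the paper's own argument viewed in mirror image: the paper expands $\mathcal K(\rsf F^{+})$ by its definition and recognizes the translated condition as the $\otimes$-characterization of $\mathcal K(\rsf F)$ from \thmref{thm:OTClosed}, whereas you apply \thmref{thm:OTClosed} to $\rsf F^{+}$ and recognize the translated condition as the definition of $\mathcal K(\rsf F)$ --- the same involution $b=\lnot f$ and the same identity $\lnot(f\otimes z)=z\to\lnot f$ doing all the work in both cases. Your explicit handling of the degenerate case $0\in\rsf F$ is a minor tidiness the paper omits, but it does not change the approach.
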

\begin{proof}
    Let $z\in\mathcal K(\rsf F)$. Now 
    \begin{align*}
	w\in\mathcal K(\rsf F^{+})\text{ iff }& a\notin\rsf F^{+}\text{
implies }w\to 
    a\notin\rsf F^{+}\\
    \text{ iff }&\lnot a\in\rsf F\text{ implies }w\otimes\lnot 
    a\in\rsf F\\
    \text{ iff }& w\otimes\lnot a\notin\rsf F\text{ implies }\lnot 
    a\notin\rsf F\\
    &\text{ iff }\lnot a\in\rsf F\text{ implies }w\otimes\lnot 
    a\in\rsf F\\
    &\text{ iff }b\in\rsf F\text{ implies }w\otimes b\in\rsf F\\
    &\text{ iff }b\in\mathcal K(\rsf F).
\end{align*}
%
\end{proof}

We want to extend this to all subordinates. Our approach is indirect, 
we work in the interval $[a,1]$ to study $\rsf F_{a}$ -- using the 
induced MV-structure on this interval. However we need some idea of 
the relationship between the $\mathcal K_{[a,1]}$ and $\mathcal 
K=\mathcal K_{\mathcal L}$. 

\begin{defn}
    Let $\rsf F$ be any order filter. Then $\rsf F^{\geq a}=\rsf 
    F\cap[a,1]$ is the \emph{localization of $\rsf F$ to $[a,1]$}.
\end{defn}

\begin{prop}
    Let $\rsf F$ be any order filter. Let $a\le b$ in $\mathcal L$. 
    Let $\rsf G=\left(\rsf F_{b}\right)^{\geq a}$ and 
    $\rsf H$ be $\left(\rsf F^{\geq a}\right)_{b}$ as computed in 
    $[a,1]$. 
    
    Then $\rsf G=\rsf H$.
\end{prop}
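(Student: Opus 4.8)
The plan is to pin down the induced MV-structure on the interval $[a,1]$ and then observe that $\rsf G$ and $\rsf H$ are cut out of $[a,1]$ by the \emph{same} membership condition on $z$. The essential structural fact is that, in the interval MV-algebra carried by $[a,1]$, the implication is simply the restriction of the ambient implication: for $x,y\in[a,1]$ one has $x\to_{[a,1]}y=x\to y$, while the negation becomes $\lnot_{[a,1]}x=x\to a$. I would first record this. Taking $\lnot_{[a,1]}x=x\to a$ as the involution and $x\oplus_{[a,1]}y=(x\otimes\lnot a)\oplus y$ as the sum, a short check confirms that $a$ is the neutral element, since $x\oplus_{[a,1]}a=(x\to a)\to a=x\join a=x$ for $x\ge a$, and that the resulting implication collapses, since $(\lnot x\oplus a)\otimes\lnot a=\lnot x$ forces $\lnot_{[a,1]}x\oplus_{[a,1]}y=\lnot x\oplus y=x\to y$. (This is just the induced MV-structure the preceding remarks already invoke.)

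Granting this, the rest is a direct comparison. Since $a\le b$ we have $b\in[a,1]$, so the subordinate $(\rsf F^{\geq a})_{b}$ is meaningful, and $\rsf F^{\geq a}=\rsf F\cap[a,1]$ is an order filter of $[a,1]$ because $[a,1]$ is upward closed in $\mathcal L$. Unwinding the two definitions, for $z\in[a,1]$,
\begin{align*}
    z\in\rsf G &\iff z\to b\notin\rsf F,\\
    z\in\rsf H &\iff z\to_{[a,1]}b\notin\rsf F^{\geq a}\iff z\to b\notin\rsf F\cap[a,1],
\end{align*}
where the last step uses $z\to_{[a,1]}b=z\to b$. The bridge between the two right-hand conditions is the observation that $z\to b=\lnot z\oplus b\ge b\ge a$, so $z\to b$ \emph{always} lies in $[a,1]$. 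Hence $z\to b\notin\rsf F\cap[a,1]$ holds precisely when $z\to b\notin\rsf F$, and the two conditions coincide. As $\rsf G$ and $\rsf H$ are both subsets of $[a,1]$ defined by this identical condition on $z$, we conclude $\rsf G=\rsf H$.

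I expect the only genuine obstacle to be the first step: correctly identifying the implication of the localized algebra with the restriction of $\to$ (equivalently, verifying that $x\mapsto x\to a$ is the interval involution). Once that identification is secured, the equality of the two filters is immediate, because the two membership tests literally reduce to the same statement $z\to b\notin\rsf F$, with the containment $z\to b\in[a,1]$ making the intersection with $[a,1]$ in $\rsf H$ inert.
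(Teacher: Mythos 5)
Your proof is correct and follows essentially the same route as the paper: the decisive observation in both is that $z\to b\geq b\geq a$, so $z\to b$ always lies in $[a,1]$ and membership in $\rsf F$ coincides with membership in $\rsf F^{\geq a}$. The only difference is that you explicitly verify that the induced implication on $[a,1]$ restricts the ambient one, a fact the paper uses tacitly; this is a reasonable addition but not a different argument.
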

\begin{proof}
    If $y\in\rsf H$ then $y\to b\geq b\geq a$ and so 
    $y\to b\notin\rsf F$ iff $y\to b\notin \rsf F^{\geq a}$. Hence 
    $y\in\rsf G$.
    
    If $y\in\rsf G$ then $y\geq a$ and $y\to b\notin\rsf F$ and so 
    $y\in\rsf H$.
\end{proof}

\begin{prop}
    If $\rsf F$ is any prime filter and $a\notin\rsf F$ then
    $$
    \mathcal K_{[a,1]}(\rsf F^{\geq a})=\mathcal K(\rsf F)^{\geq a}.
    $$
\end{prop}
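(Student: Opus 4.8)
The plan is to reduce both sides to the product-characterization of the kernel supplied by \thmref{thm:OTClosed}, and then use primeness together with $a\notin\rsf F$ to match the two descriptions. First I would record the product that $[a,1]$ inherits from the ambient MV-structure: its unit is the top element $1$, and $x\otimes_{a}y=(x\otimes y)\join a$ (commutativity is clear, and associativity follows from the distributivity of $\otimes$ over $\join$ together with $a\otimes z\le a$). Note also that $\rsf F^{\geq a}=\rsf F\cap[a,1]$ is an order filter of $[a,1]$, so \thmref{thm:OTClosed} applies inside $[a,1]$. Since $f\otimes_{a}z=(f\otimes z)\join a$ already lies in $[a,1]$, its membership in $\rsf F^{\geq a}$ is just membership in $\rsf F$, and the theorem gives
$$
\mathcal K_{[a,1]}(\rsf F^{\geq a})=\Set{z\in[a,1] | \forall f\in\rsf F^{\geq a}\ (f\otimes z)\join a\in\rsf F}.
$$
Applying \thmref{thm:OTClosed} in $\mathcal L$ itself gives $\mathcal K(\rsf F)^{\geq a}=\Set{z\in[a,1] | \forall f\in\rsf F\ f\otimes z\in\rsf F}$.

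Comparing the two, I would invoke primeness twice. Because $a\notin\rsf F$ and $\rsf F$ is prime, for $z\in[a,1]$ we have $(f\otimes z)\join a\in\rsf F$ iff $f\otimes z\in\rsf F$; this rewrites the interval kernel as $\Set{z\in[a,1] | \forall f\in\rsf F^{\geq a}\ f\otimes z\in\rsf F}$. The only remaining discrepancy is the range of the universal quantifier: $\rsf F^{\geq a}$ against all of $\rsf F$. The inclusion $\mathcal K(\rsf F)^{\geq a}\subseteq\mathcal K_{[a,1]}(\rsf F^{\geq a})$ is immediate, since one then quantifies over fewer $f$. For the reverse, take $z$ in the interval kernel and an arbitrary $g\in\rsf F$, and set $f=g\join a$; then $f\in\rsf F\cap[a,1]=\rsf F^{\geq a}$, so $(g\join a)\otimes z=(g\otimes z)\join(a\otimes z)\in\rsf F$ by distributivity. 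As $a\otimes z\le a\notin\rsf F$, primeness forces $g\otimes z\in\rsf F$, whence $z\in\mathcal K(\rsf F)^{\geq a}$.

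The routine parts are the MV-identities (distributivity of $\otimes$ over $\join$, and $x\join a=x$ for $x\in[a,1]$) and the verification that $[a,1]$ carries the stated product. The main obstacle is the quantifier mismatch: the kernel computed in $[a,1]$ only tests filter elements lying above $a$, whereas the ambient kernel tests all of $\rsf F$. The substitution $g\mapsto g\join a$, combined with the observation that $a\otimes z$ stays below $a$ and hence outside $\rsf F$, is precisely the device that bridges this gap; it is exactly here that both the hypothesis $a\notin\rsf F$ and the primeness of $\rsf F$ are used.
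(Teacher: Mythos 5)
Your proof is correct, but it takes a genuinely different route from the paper's. The paper stays entirely in the language of subordinates: it uses the immediately preceding proposition (that $(\rsf F_b)^{\geq a}=(\rsf F^{\geq a})_b$), writes each kernel as $\bigcap_b \rsf F_b$, observes that one intersection runs over a smaller index set to get $\supseteq$, and proves the converse by the contrapositive via $z\to(w\join a)=(z\to w)\join(z\to a)$ together with primeness. You instead route everything through the product characterization of \thmref{thm:OTClosed}, applied both in $\mathcal L$ and in the interval algebra $[a,1]$ with its induced product $x\otimes_a y=(x\otimes y)\join a$. The two proofs use primeness and $a\notin\rsf F$ at exactly analogous moments (to strip off a join with something lying below $a$), so they are close in spirit; the real difference is the machinery invoked. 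Your version is more self-contained in that it does not need the localization-of-subordinates proposition, and the quantifier-bridging trick $g\mapsto g\join a$ with $a\otimes z\le a\notin\rsf F$ is clean; the cost is that you must know and justify the explicit form of the induced monoid operation on $[a,1]$, which the paper uses only implicitly (it only ever needs that $\to$ restricts to the interval). Both arguments are complete; just make sure the identification $x\otimes_a y=(x\otimes y)\join a$ and the distributivity of $\otimes$ over $\join$ are either cited or verified, since the paper never records them.
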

\begin{proof}
    From the last proposition we know that if $b\in[a,1]\setminus\rsf 
    F^{\geq a}$ then $b\notin\rsf F$ and $\left(\rsf F^{\geq 
    a}\right)_{b}=\rsf F_{b}\cap[a,1]$. Hence 
    \begin{align*}
	\mathcal K_{[a,1]}(\rsf F^{\geq 
	a})&=\bigcap_{b\in[a,1]\setminus\rsf F^{\geq 
	a}}\left(\rsf F^{\geq a}\right)_{b}\\
	&= \left(\smash[b]{\bigcap_{b\in[a,1]\setminus\rsf F}}\rsf 
	F_{b}\right)\cap[a,1]\\[10pt]
	&\supseteq \left(\smash[b]{\bigcap_{b\notin\rsf F}}\rsf 
	F\right)\cap[a,1]\\[10pt]
	&= \mathcal K(\rsf F)\cap[a,1].
    \end{align*}
    Conversely, if $z\geq a$ and $z\notin\mathcal K(\rsf F)$ then 
    there is some $w\notin\rsf F$ with $z\to w\in\rsf F$.  Hence 
    $z\to (w\join a)= (z\to w)\join(z\to a)\in\rsf F\cap[a,1]$, ie
    $z\notin \rsf F_{w\join a}\cap[a,1]$. As $\rsf F$ is prime, 
    $w\join a\notin\rsf F$ and so $z\notin \mathcal 
    K_{[a,1]}(\rsf F^{\geq a})$.
\end{proof}

Now we notice that from the point of view of $[a,1]$ we have $\left(\rsf F^{\geq 
a}\right)^{+}$ is exactly $\left(\rsf F^{\geq a}\right)_{a} = \rsf 
F_{a}^{\geq a}$. Hence we have
\begin{lem}
    $$
    \mathcal K_{[a,1]}(\rsf F^{\geq a})=\mathcal K_{[a,1]}(\rsf 
    F_{a}^{\geq a}).
    $$
\end{lem}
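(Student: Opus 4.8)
The plan is to recognize this identity as nothing more than \propref{prop:filPlus} applied inside the MV-algebra $[a,1]$ rather than inside $\mathcal L$. The interval $[a,1]$ carries its own induced MV-structure in which $a$ is the zero element, and $\rsf F^{\geq a}=\rsf F\cap[a,1]$ is an order filter of this smaller algebra. Since \propref{prop:filPlus} was proved for an arbitrary order filter of an arbitrary MV-algebra---its proof invokes only negation and $\otimes$, both of which are available internally in $[a,1]$---it applies verbatim to $\rsf F^{\geq a}$, giving
$$
\mathcal K_{[a,1]}(\rsf F^{\geq a})=\mathcal K_{[a,1]}\bigl((\rsf F^{\geq a})^{+}\bigr),
$$
where the superscript $+$ is now computed relative to $[a,1]$.

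The second step is to identify $(\rsf F^{\geq a})^{+}$. Within $[a,1]$ the filter $\rsf G^{+}$ is by definition the subordinate $\rsf G_{a}$ at the bottom element, which here is $a$. Applying the proposition relating $(\rsf F_{b})^{\geq a}$ and $(\rsf F^{\geq a})_{b}$ with $b=a$, we obtain $(\rsf F^{\geq a})_{a}=\rsf F_{a}\cap[a,1]=\rsf F_{a}^{\geq a}$. This is precisely the identification recorded in the remark immediately preceding the lemma, so that $(\rsf F^{\geq a})^{+}=\rsf F_{a}^{\geq a}$. Substituting this into the display above yields the claim directly.

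There is no genuine obstacle here: the entire content of the lemma is already contained in \propref{prop:filPlus} once one passes to the localized algebra. The only point requiring care is bookkeeping---every occurrence of $+$, of subordination, and of the kernel in the statement must be read relative to the MV-structure of $[a,1]$ and not that of $\mathcal L$, and one must confirm that \propref{prop:filPlus} is a structure-internal statement. It is, since its proof never refers to elements of $\mathcal L$ lying below $a$. Making these interpretations explicit is essentially the whole of the argument.
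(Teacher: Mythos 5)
Your proposal is correct and is exactly the paper's argument: the paper's proof is the single line ``Apply \propref{prop:filPlus}'', relying on the remark immediately preceding the lemma that identifies $(\rsf F^{\geq a})^{+}$ with $\rsf F_{a}^{\geq a}$ via the earlier localization proposition. You have simply made the same bookkeeping explicit.
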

\begin{proof}
    Apply \propref{prop:filPlus}.
\end{proof}

And so we have 
$$
\mathcal K(\rsf F)^{\geq a}=\mathcal K_{[a,1]}(\rsf F^{\geq a})=
\mathcal K_{[a,1]}(\rsf F_{a}^{\geq a})=\mathcal K(\rsf F_{a})^{\geq
a}.
$$
It turns out that this is enough to give us out theorem.
\begin{thm}
    Let $\rsf F$ be any prime filter and $a\notin\rsf F$. Then
    $$
	\mathcal K(\rsf F)=\mathcal K(\rsf F_{a}).
    $$
\end{thm}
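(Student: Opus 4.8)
The plan is to leverage the localization identity just established, namely $\mathcal K(\rsf F)\cap[a,1]=\mathcal K(\rsf F_{a})\cap[a,1]$, and to upgrade this equality of \emph{localizations} to full equality by exploiting primeness. The first thing I would record is that both kernels are prime implication filters that avoid $a$: since the kernel of a prime filter is again prime (the first theorem of Section~2), $\mathcal K(\rsf F)$ is prime because $\rsf F$ is, and $\rsf F_{a}$ is itself a prime order filter by the first proposition (a prime lattice filter is meet-closed), so $\mathcal K(\rsf F_{a})$ is prime as well. Moreover $\mathcal K(\rsf G)\subseteq\rsf G$ for every order filter $\rsf G$, and since $a\notin\rsf F$ and $a\notin\rsf F_{a}$ we obtain $a\notin\mathcal K(\rsf F)$ and $a\notin\mathcal K(\rsf F_{a})$.

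The key step is a descent observation: for any prime filter $\rsf P$ with $a\notin\rsf P$ and any $z$, one has $z\in\rsf P$ iff $z\join a\in\rsf P$. The forward direction is just upward closure, while the reverse is where primeness does the work, since $z\join a\in\rsf P$ forces $z\in\rsf P$ or $a\in\rsf P$, and the latter is excluded. I expect this to be the crux of the argument: the localization identity by itself is not enough, because implication filters are upward closed but not downward closed, so knowing that $z\join a$ lies in the common localization does not on its own place $z$ in either kernel\,--\,primeness, together with $a\notin\mathcal K$, is exactly what bridges that gap.

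With both ingredients in hand the theorem falls out by a short chain of equivalences. Given any $z\in\mathcal L$, note that $z\join a\geq a$, so $z\join a$ lies in $[a,1]$, and
\[
z\in\mathcal K(\rsf F)\iff z\join a\in\mathcal K(\rsf F)\iff z\join a\in\mathcal K(\rsf F_{a})\iff z\in\mathcal K(\rsf F_{a}),
\]
where the two outer equivalences are the descent observation applied to $\mathcal K(\rsf F)$ and to $\mathcal K(\rsf F_{a})$, and the middle one is the localization identity evaluated at the point $z\join a$ of $[a,1]$. This yields $\mathcal K(\rsf F)=\mathcal K(\rsf F_{a})$ at once, with no need to treat the two inclusions separately.

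For completeness I would also note a self-contained route that sidesteps the localization machinery: using \thmref{thm:OTClosed} together with $(\rsf F_{a})_{a}=\rsf F$, it suffices by symmetry to prove $\mathcal K(\rsf F)\subseteq\mathcal K(\rsf F_{a})$. For $z\in\mathcal K(\rsf F)$ and $g\in\rsf F_{a}$ one computes $(g\otimes z)\to a=z\to(g\to a)$; were this element in $\rsf F$, the defining property of $\mathcal K(\rsf F)$ would force $g\to a\in\rsf F$, contradicting $g\in\rsf F_{a}$, so $g\otimes z\in\rsf F_{a}$ and hence $z\in\mathcal K(\rsf F_{a})$ by \thmref{thm:OTClosed}.
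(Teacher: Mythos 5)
Your main argument is correct and is essentially the paper's own proof: both rest on the localization identity $\mathcal K(\rsf F)\cap[a,1]=\mathcal K(\rsf F_{a})\cap[a,1]$ established just before the theorem, and then use primeness of the kernel together with $a\notin\mathcal K$ to descend from $z\join a$ to $z$. The only cosmetic difference is in the reverse inclusion, where the paper invokes $(\rsf F_{a})_{a}=\rsf F$ while you apply the same descent observation symmetrically to $\mathcal K(\rsf F)$; both are valid, and your bookkeeping of why each kernel is prime and omits $a$ is accurate. Your closing alternative is, however, a genuinely different and arguably cleaner route: the computation $(g\otimes z)\to a=z\to(g\to a)$ together with \thmref{thm:OTClosed} gives $\mathcal K(\rsf F)\subseteq\mathcal K(\rsf F_{a})$ for an arbitrary order filter with $a\notin\rsf F$ (primeness is not needed for this inclusion), and primeness enters only through $(\rsf F_{a})_{a}=\rsf F$ to reverse it. That version bypasses the localization machinery of the preceding section entirely, at the cost of not exhibiting the relationship between $\mathcal K_{[a,1]}$ and $\mathcal K_{\mathcal L}$ that the paper is developing for later use.
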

\begin{proof}
    Let $z\in\mathcal K(\rsf F)$. Then $z\join a\in\mathcal K(\rsf 
    F)\cap[a,1]=\mathcal K(\rsf F_{a})\cap[a,1]$.
    As $\mathcal K(\rsf F_{a})$ is prime and $a\notin\rsf F_{a}$ (and 
    hence not in $\mathcal K(\rsf F_{a})$) we must have $z\in\mathcal 
    K(\rsf F_{a})$.
    
    The converse works as $(\rsf F_{a})_{a}=\rsf F$.
\end{proof}

\section{Ordering on Filters}
In this section we show that the order on $\Set{\rsf F_{a} | 
a\in\mathcal L}$ is naturally isomorphic to a lower interval of 
$\mathcal L/\mathcal K(\rsf F)$.

Now let us fix on a prime filter $\rsf F$ and let $\rsf 
K=\mathcal K(\rsf F)$. 
Let $\eta$ be the canonical epimorphism from $\mathcal L$ to 
$\mathcal L/\rsf K$.

\begin{prop}
    $$
	\left(\rsf F/\rsf K\right)_{\eta(a)}=\rsf F_{a}/\rsf K.
    $$
\end{prop}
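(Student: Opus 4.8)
The plan is to prove set equality by unwinding both sides through the definition of subordination in the quotient, using the fact that $\eta$ is an MV-homomorphism (so it commutes with $\to$) and that $\rsf K=\mathcal K(\rsf F)\subseteq\rsf F$. The central bridge is a membership criterion: for any $z\in\mathcal L$ and any $w$, we should have $\eta(z)\in(\rsf F/\rsf K)_{\eta(a)}$ iff $\eta(z)\to\eta(a)\notin\rsf F/\rsf K$ iff $\eta(z\to a)\notin\rsf F/\rsf K$, and the first thing I would establish is that for an implication filter $\rsf K\subseteq\rsf F$ we have $\eta(x)\in\rsf F/\rsf K$ iff $x\in\rsf F$. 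This last equivalence is essentially $\eta^{-1}[\rsf F/\rsf K]=\rsf F$, which is exactly \thmref{thm:inclK} applied with $P=\rsf K=\mathcal K(\rsf F)$, since trivially $\mathcal K(\rsf F)\subseteq\mathcal K(\rsf F)$. So the preliminary step costs nothing beyond citing that theorem.

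Granting that, I would argue directly as follows. Starting from the left, $\eta(z)\in(\rsf F/\rsf K)_{\eta(a)}$ means $\eta(z)\to\eta(a)\notin\rsf F/\rsf K$; since $\eta$ is a homomorphism this is $\eta(z\to a)\notin\rsf F/\rsf K$, which by the bridge equivalence is $z\to a\notin\rsf F$, i.e. $z\in\rsf F_{a}$, i.e. $\eta(z)\in\rsf F_{a}/\rsf K$. Written as a chain:
\begin{align*}
    \eta(z)\in(\rsf F/\rsf K)_{\eta(a)} &\iff \eta(z)\to\eta(a)\notin\rsf F/\rsf K\\
    &\iff \eta(z\to a)\notin\rsf F/\rsf K\\
    &\iff z\to a\notin\rsf F\\
    &\iff z\in\rsf F_{a}\\
    &\iff \eta(z)\in\rsf F_{a}/\rsf K.
\end{align*}
Because both $(\rsf F/\rsf K)_{\eta(a)}$ and $\rsf F_{a}/\rsf K$ are subsets of $\mathcal L/\rsf K$ and every element of $\mathcal L/\rsf K$ has the form $\eta(z)$, this chain of equivalences gives the desired equality.

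The step I expect to need the most care is the justification that passing to images and preimages is clean, i.e. that the bridge equivalence $\eta(x)\in\rsf F/\rsf K\iff x\in\rsf F$ is genuinely biconditional rather than just one inclusion. The forward direction ($x\in\rsf F\Rightarrow\eta(x)\in\rsf F/\rsf K$) is immediate from the definition of $\rsf F/\rsf K$ as the image; the reverse is precisely the content of $\eta^{-1}[\rsf F/\rsf K]\subseteq\rsf F$, which is where the hypothesis $\rsf F$ prime (so that $\rsf K=\mathcal K(\rsf F)$ is an implication filter by the first theorem of \S2, making the quotient legitimate) and \thmref{thm:inclK} do the real work. I would also remark that $\rsf F_{a}$ is $\rsf K$-saturated: since $\mathcal K(\rsf F)=\mathcal K(\rsf F_{a})$ by the final theorem of \S3, the same bridge equivalence holds with $\rsf F_{a}$ in place of $\rsf F$, which is what legitimizes reading the last line of the display as membership in the honest image $\rsf F_{a}/\rsf K$ rather than in some smaller saturation.
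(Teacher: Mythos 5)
Your proof is correct and follows essentially the same route as the paper: the same chain of equivalences, using $\eta^{-1}[\rsf F/\rsf K]=\rsf F$ (via Theorem~\ref{thm:inclK} with $\rsf K=\mathcal K(\rsf F)$) for the downward bridge and $\mathcal K(\rsf F)=\mathcal K(\rsf F_{a})$ for reading the last line as membership in $\rsf F_{a}/\rsf K$. Your explicit justification of the two saturation steps is in fact slightly more careful than the paper's terse ``as $\rsf K=\mathcal K(\rsf F)$'' annotations.
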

\begin{proof}
    \begin{align*}
        [x]\in \left(\rsf F/\rsf K\right)_{\eta(a)} & \iff 
	[x]\to[a]=[x\to a]\notin\rsf F/\rsf K  \\
         & \iff x\to a\notin\rsf F &&\text{ as }\rsf K=\mathcal 
	 K(\rsf F)  \\
         & \iff x\in\rsf F_{a}\\
         & \iff [x]\in \rsf F_{a}/\rsf K &&\text{ as }\rsf K=\mathcal 
	 K(\rsf F_{a}).
    \end{align*}
\end{proof}

\begin{cor}
    $$
	\rsf F_{a}=\eta^{-1}[\left(\rsf F/\rsf K\right)_{\eta(a)}]
    $$
\end{cor}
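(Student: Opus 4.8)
The plan is to reduce the claim to the immediately preceding proposition together with the saturation criterion of \thmref{thm:inclK}. First I would apply that proposition to rewrite the right-hand side, obtaining
$$
\eta^{-1}[(\rsf F/\rsf K)_{\eta(a)}] = \eta^{-1}[\rsf F_{a}/\rsf K].
$$
It then remains only to show that $\eta^{-1}[\rsf F_{a}/\rsf K] = \rsf F_{a}$, i.e.\ that the order filter $\rsf F_{a}$ is saturated with respect to the quotient map $\eta$.

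To obtain this I would invoke \thmref{thm:inclK} with $\rsf F_{a}$ in the role of the order filter and $\rsf K$ in the role of $P$. That theorem yields $\eta_{\rsf K}^{-1}[\rsf F_{a}/\rsf K] = \rsf F_{a}$ exactly when $\rsf K \subseteq \mathcal K(\rsf F_{a})$. Here the required inclusion is in fact an equality: by the theorem computing the kernel of a subordinate we have $\mathcal K(\rsf F) = \mathcal K(\rsf F_{a})$ whenever $\rsf F$ is prime and $a \notin \rsf F$, and since $\rsf K$ was defined to be $\mathcal K(\rsf F)$, we get $\rsf K = \mathcal K(\rsf F_{a})$. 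Thus the hypothesis of \thmref{thm:inclK} is satisfied and the saturation identity follows, completing the chain of equalities.

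There is no serious obstacle, as the corollary is essentially the composition of the preceding proposition with saturation; the only care needed is in confirming that the hypotheses of the two cited results are genuinely in force. In particular, \thmref{thm:inclK} requires $\rsf K$ to be an implication filter, which holds because the kernel of a prime filter is a prime implication filter by the earlier theorem, and the kernel-invariance result requires $\rsf F$ prime with $a \notin \rsf F$, both of which are standing assumptions. With these in place the identity $\rsf F_{a} = \eta^{-1}[(\rsf F/\rsf K)_{\eta(a)}]$ is immediate, and no further computation is required.
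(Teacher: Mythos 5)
Your proposal is correct and follows essentially the same route as the paper: the paper's one-line proof likewise chains $\rsf F_{a}=\eta^{-1}[\rsf F_{a}/\rsf K]=\eta^{-1}[(\rsf F/\rsf K)_{\eta(a)}]$, using the preceding proposition for the second equality and (implicitly) the saturation of $\rsf F_{a}$ under $\eta$ for the first. You have merely made explicit the justification of that saturation step via \thmref{thm:inclK} and the identity $\mathcal K(\rsf F)=\mathcal K(\rsf F_{a})$, which is exactly what the paper leaves tacit.
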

\begin{proof}
    As 
    $\rsf F_{a}=\eta^{-1}[\rsf F_{a}/\rsf K]=\eta^{-1}[\left(\rsf 
    F/\rsf K\right)_{\eta(a)}]$.
\end{proof}

\begin{cor}
    If $\eta(a)\le\eta(b)$ then $\rsf F_{b}\subseteq\rsf F_{a}$, with 
    equality if $\eta(a)=\eta(b)$.
\end{cor}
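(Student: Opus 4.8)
The plan is to reduce the claim to the elementary monotonicity of subordinates already established in the second proposition of the introduction (``if $a \le b$ then $\rsf F_{b} \subseteq \rsf F_{a}$''), but applied inside the quotient $\mathcal L/\rsf K$, and then to transport the resulting inclusion back to $\mathcal L$ via the preceding corollary. First I would observe that $\mathcal L/\rsf K$ is itself an MV-algebra and that $\rsf F/\rsf K$ is an order filter in it; this is legitimate because $\rsf K=\mathcal K(\rsf F)\subseteq\rsf F$ by the first theorem of section~2. Hence the monotonicity proposition holds verbatim for $\rsf F/\rsf K$ in $\mathcal L/\rsf K$, and since the hypothesis is $\eta(a)\le\eta(b)$ it gives at once
$$
\left(\rsf F/\rsf K\right)_{\eta(b)}\subseteq\left(\rsf F/\rsf K\right)_{\eta(a)}.
$$

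Next I would apply $\eta^{-1}$ to both sides. Preimages preserve inclusions, and by the preceding corollary we have $\rsf F_{a}=\eta^{-1}[(\rsf F/\rsf K)_{\eta(a)}]$ and $\rsf F_{b}=\eta^{-1}[(\rsf F/\rsf K)_{\eta(b)}]$, so this yields exactly $\rsf F_{b}\subseteq\rsf F_{a}$. For the equality statement, if $\eta(a)=\eta(b)$ then both $\eta(a)\le\eta(b)$ and $\eta(b)\le\eta(a)$ hold, so applying the inclusion in each direction gives $\rsf F_{a}=\rsf F_{b}$; equivalently, the two subordinates in the quotient are literally the same set and $\eta^{-1}$ carries it to a single set.

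The only point requiring any care---and it is minor---is the transfer of the monotonicity proposition to the quotient: one must confirm that subordinates computed in $\mathcal L/\rsf K$ obey the same inequality $\eta(x)\to\eta(a)\le\eta(x)\to\eta(b)$ that drives the original proof. This is immediate, since $\eta$ is an MV-homomorphism and so preserves both $\to$ and the order, while $\rsf K\subseteq\rsf F$ guarantees $\rsf F/\rsf K$ is a genuine order filter. Thus no substantive obstacle arises, and the corollary follows by pure transport along $\eta$.
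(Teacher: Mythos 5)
Your proof is correct and follows essentially the same route as the paper: the equality case comes straight from the identification $\rsf F_{a}=\eta^{-1}[(\rsf F/\rsf K)_{\eta(a)}]$ in the preceding corollary, and the inclusion comes from the basic monotonicity of subordinates (the paper cites that proposition directly, while you apply it inside $\mathcal L/\rsf K$ and pull back along $\eta^{-1}$, which amounts to the same thing). Your extra care in checking that $\rsf F/\rsf K$ is an order filter of the quotient MV-algebra is a worthwhile detail the paper leaves implicit.
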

\begin{proof}
    Equality is immediate from the last corollary. 
    The inequality follows from \propref{prop:subOrder}.
\end{proof}

We aim to show  that the converse of this theorem is also true and so
get a complete description
of the ordering on the set $\Set{\rsf F_a| a\in\mathcal L}$.
First we have the easy case.
\begin{prop}\label{prop:easyCase}
	If $\rsf F_a=\rsf F_0$ then $\eta(a)=\eta(0)$.
\end{prop}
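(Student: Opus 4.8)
The plan is to reduce the conclusion $\eta(a)=\eta(0)$ to a single membership statement and then verify it from the hypothesis by a short order-filter argument. First I would observe that, since $\rsf K=\mathcal K(\rsf F)$ is an implication filter, $\eta(a)=\eta(0)$ holds exactly when both $a\to 0$ and $0\to a$ lie in $\rsf K$. Because $0\to a=1\in\rsf K$ automatically and $a\to 0=\lnot a$, the whole proposition collapses to proving the single fact $\lnot a\in\rsf K=\mathcal K(\rsf F)$.

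For this I would invoke \thmref{thm:OTClosed}, which characterizes the kernel as $\mathcal K(\rsf F)=\Set{z | \forall f\in\rsf F\ f\otimes z\in\rsf F}$. Thus it suffices to show that $f\otimes\lnot a\in\rsf F$ for every $f\in\rsf F$. The key manipulation is the de Morgan identity $f\otimes\lnot a=\lnot(f\to a)$, which rewrites the target membership as $\lnot(f\to a)\in\rsf F$.

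Now I would bring in the hypothesis $\rsf F_a=\rsf F_0$. Unwinding the definitions, $z\in\rsf F_0$ iff $\lnot z\notin\rsf F$, while $z\in\rsf F_a$ iff $z\to a\notin\rsf F$, so the equality $\rsf F_a=\rsf F_0$ says precisely that $z\to a\in\rsf F\iff\lnot z\in\rsf F$ for every $z$. The crux is the right substitution: taking $z=f\to a$ yields $\lnot(f\to a)\in\rsf F\iff (f\to a)\to a\in\rsf F$, and since $(f\to a)\to a=f\join a$, the right-hand side holds because $f\in\rsf F$ forces $f\join a\in\rsf F$ by upward closure. Hence $\lnot(f\to a)=f\otimes\lnot a\in\rsf F$, as required.

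I do not anticipate a serious obstacle: this is the easy case, and once the target is identified as $\lnot a\in\mathcal K(\rsf F)$, everything follows from \thmref{thm:OTClosed} together with the identities $f\otimes\lnot a=\lnot(f\to a)$ and $(f\to a)\to a=f\join a$ (the latter already used earlier in the paper). The only genuine decision is the choice $z=f\to a$ when applying the hypothesis; the subtler converse for a general pair $a,b$, rather than comparison with $0$, is where the real work of the subsequent results will lie.
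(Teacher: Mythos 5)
Your proof is correct, and it takes a mildly but genuinely different route from the paper's. Both arguments begin with the same reduction --- $\eta(a)=\eta(0)$ iff $\lnot a\in\mathcal K(\rsf F)$ --- but the paper then argues by contradiction straight from the definition of the kernel: assuming $\lnot a\notin\mathcal K(\rsf F)$ it extracts a witness $b\notin\rsf F$ with $\lnot a\to b\in\rsf F$, rewrites this as $\lnot b\to a\in\rsf F$ so that $\lnot b\notin\rsf F_a=\rsf F_0$, and concludes $b\in\rsf F$, a contradiction. You instead argue directly through the characterization of \thmref{thm:OTClosed}, showing $f\otimes\lnot a\in\rsf F$ for each $f\in\rsf F$ via the identity $f\otimes\lnot a=\lnot(f\to a)$ and the substitution $z=f\to a$ in the hypothesis, which lands on $(f\to a)\to a=f\join a\in\rsf F$. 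The two substitutions are essentially dual: the paper instantiates the hypothesis at $z=\lnot b$ for a witness $b$ outside $\rsf F$, you instantiate it at $z=f\to a$ for each $f$ inside $\rsf F$. The paper's version is a line shorter; yours is direct rather than by contradiction and makes visible exactly where the lattice identity $(f\to a)\to a=f\join a$ (already used elsewhere in the paper) does the work. Every step you use checks out, so there is no gap.
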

\begin{proof}
	$\eta(a)=\eta(0)$ iff $\lnot a\in\mathcal K(\rsf F)$. Suppose that 
	$\rsf F_0=\rsf F_a$ but $\lnot a\notin\mathcal K(\rsf F)$. Then
	there is some $b\notin\rsf F$ with $\lnot a\to b\in\rsf F$, ie $\lnot
	b\to a\in\rsf F$ and so $\lnot b\notin\rsf F_a=\rsf F_0$. Hence $b$
	must be in $\rsf F$ -- contradiction.
\end{proof}

\begin{lem}
	Let $\rsf F_a\subseteq\rsf F_b$. Then 
	\begin{enumerate}[(i)]
		\item $\rsf F_b=\rsf F_{a\meet b}$ and 
		\item $\rsf F_a=\rsf F_{a\join b}$.
	\end{enumerate}
\end{lem}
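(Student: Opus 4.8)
The plan is to establish each of the two equalities by a pair of inclusions. In both cases one inclusion is already available from the monotonicity of the map $c\mapsto\rsf F_{c}$ recorded earlier, while the reverse inclusion is the substantive one and is where the hypothesis $\rsf F_{a}\subseteq\rsf F_{b}$ (together with $\rsf F$ being prime) does the work. Throughout I would use the two standard MV-identities for the residual in its second argument, namely $z\to(a\meet b)=(z\to a)\meet(z\to b)$ and $z\to(a\join b)=(z\to a)\join(z\to b)$, which are the covariant analogues of the identities $(x\join y)\to a=(x\to a)\meet(y\to a)$ and $(x\meet y)\to a=(x\to a)\join(y\to a)$ already employed in the opening proposition.

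For (i), the inclusion $\rsf F_{b}\subseteq\rsf F_{a\meet b}$ is immediate since $a\meet b\le b$. For the reverse I would take $z\in\rsf F_{a\meet b}$, so that $(z\to a)\meet(z\to b)=z\to(a\meet b)\notin\rsf F$. Because $\rsf F$ is prime it is meet-closed, so a meet can avoid $\rsf F$ only if one of its factors does; hence $z\to a\notin\rsf F$ or $z\to b\notin\rsf F$, that is, $z\in\rsf F_{a}$ or $z\in\rsf F_{b}$. The hypothesis $\rsf F_{a}\subseteq\rsf F_{b}$ collapses both alternatives to $z\in\rsf F_{b}$, giving $\rsf F_{a\meet b}\subseteq\rsf F_{b}$ and hence equality.

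For (ii), the inclusion $\rsf F_{a\join b}\subseteq\rsf F_{a}$ is immediate since $a\le a\join b$. For the reverse I would take $z\in\rsf F_{a}$; by the hypothesis $z$ also lies in $\rsf F_{b}$, so both $z\to a$ and $z\to b$ are outside $\rsf F$. Here I would invoke primeness in its complementary form -- the set $\mathcal L\setminus\rsf F$ is closed under $\join$ -- to conclude that $z\to(a\join b)=(z\to a)\join(z\to b)\notin\rsf F$, i.e. $z\in\rsf F_{a\join b}$. This yields $\rsf F_{a}\subseteq\rsf F_{a\join b}$ and hence equality. The only point requiring care is to pair each part with the correct filter property: part (i) needs meet-closure of $\rsf F$ and the distribution of $\to$ over $\meet$, whereas part (ii) needs the join-avoidance of the complement and the distribution over $\join$. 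Once the hypothesis has been used to eliminate the stray disjunct in (i) and to force $z\in\rsf F_{b}$ in (ii), the equalities follow with no real obstacle beyond this bookkeeping.
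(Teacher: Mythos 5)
Your proof is correct and follows essentially the same route as the paper: one inclusion in each part from monotonicity of $c\mapsto\rsf F_{c}$, the distribution of $\to$ over $\meet$ and $\join$ in the second argument, meet-closure of $\rsf F$ for (i) and primeness for (ii), with the hypothesis $\rsf F_{a}\subseteq\rsf F_{b}$ collapsing the case split. (Your phrasing of the endgame of (i) as $z\in\rsf F_{a}\cup\rsf F_{b}=\rsf F_{b}$ is in fact cleaner than the paper's, which misprints the union as an intersection.)
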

\begin{proof}
	\begin{enumerate}[(i)]
		\item As $a\meet b\le b$ we have $\rsf F_b\subseteq\rsf F_{a\meet
		b}$.
		
		If $y\to(a\meet b)= (y\to a)\meet (y\to b)\notin\rsf F$ then at
		least one of 
		$y\to a$ or $y\to b$ is not in $\rsf F$ and so $y\in\rsf
		F_a\cap\rsf F_b= \rsf F_b$.
		
		\item As $a\le a\join b$ we have $\rsf F_{a\join b}\subseteq\rsf
		F_a$.
		
		If $y\in\rsf F_a$ then $y$ is also in $\rsf F_b$ and so $y\to a$
and $y\to b$ are not in $\rsf F$. As $\rsf F$ is prime, we have
$y\to(a\join b)= (y\to a)\join(a\to b)\notin\rsf F$.
	\end{enumerate}
\end{proof}

\begin{thm}\label{thm:etaK}
	Let $\rsf F$ be a prime filter with $a,b\notin\rsf F$.
	\begin{enumerate}[(i)]
		\item If $a\le b$ and $\rsf F_a=\rsf F_b$ then $\eta(a)=\eta(b)$.
		
		\item If $\rsf F_a=\rsf F_b$ then $\eta(a)=\eta(b)$.
		
		\item If $\rsf F_a\subseteq\rsf F_b$ then $\eta(a)\geq\eta(b)$.
	\end{enumerate}
\end{thm}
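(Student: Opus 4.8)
The plan is to prove (i) first by localizing to $[a,1]$ and reducing to the already-established easy case \propref{prop:easyCase}, and then to deduce (ii) and (iii) from (i) using the preceding lemma together with the fact that $\eta$ is a lattice homomorphism.

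For (i), assume $a\le b$, both outside $\rsf F$, and $\rsf F_a=\rsf F_b$. I work inside the interval $[a,1]$ with its induced MV-structure, where $a$ is the zero element and $\rsf F^{\ge a}$ is a prime filter (primeness passes to the sublattice since $a\le x\join y\in\rsf F$ forces one of $x,y$, each already $\ge a$, into $\rsf F$). The localization results already proved give $(\rsf F^{\ge a})_b=\rsf F_b\cap[a,1]$ (using $b\notin\rsf F$, $b\ge a$) and $(\rsf F^{\ge a})^+=(\rsf F^{\ge a})_a=\rsf F_a\cap[a,1]$, while $\mathcal K_{[a,1]}(\rsf F^{\ge a})=\mathcal K(\rsf F)\cap[a,1]$. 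Intersecting $\rsf F_a=\rsf F_b$ with $[a,1]$ therefore yields $(\rsf F^{\ge a})_b=(\rsf F^{\ge a})^+$, which is exactly the hypothesis of \propref{prop:easyCase} read inside $[a,1]$ (the subordinate at $b$ equals the subordinate at the zero of $[a,1]$). Applying that proposition in $[a,1]$ gives $\eta_{[a,1]}(b)=\eta_{[a,1]}(a)$, where $\eta_{[a,1]}$ is the canonical map of $[a,1]$ modulo its kernel.

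The key step is then to move this congruence back to $\mathcal L$. Since $a$ is the zero of $[a,1]$, the biconditional of $b$ and $a$ in $[a,1]$ collapses: $a\to_{[a,1]}b=1$, while $b\to_{[a,1]}a$ is the relative negation $b\to a$, which lies in $[a,1]$ because $b\to a\ge a$. Hence $\eta_{[a,1]}(b)=\eta_{[a,1]}(a)$ says precisely that $b\to a\in\mathcal K_{[a,1]}(\rsf F^{\ge a})=\mathcal K(\rsf F)\cap[a,1]$, so $b\to a\in\rsf K$. Combined with $a\to b=1\in\rsf K$ (from $a\le b$), both directions of the biconditional lie in $\rsf K$, giving $\eta(a)=\eta(b)$. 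I expect this translation — checking that $\to$ and the biconditional in the induced structure on $[a,1]$ restrict to the ambient operations on the relevant elements $b,a$ — to be the main obstacle, since it is where the interval's MV-structure must be reconciled with that of $\mathcal L$.

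Parts (ii) and (iii) are then formal. For (ii), $\rsf F_a=\rsf F_b$ gives $\rsf F_a\subseteq\rsf F_b$, so the preceding lemma yields $\rsf F_b=\rsf F_{a\meet b}$ and $\rsf F_a=\rsf F_{a\join b}$; as $\rsf F_a=\rsf F_b$ these force $\rsf F_{a\meet b}=\rsf F_{a\join b}$. Since $a\meet b\le a\join b$ and, by primeness and upward closure, both lie outside $\rsf F$, part (i) gives $\eta(a\meet b)=\eta(a\join b)$, i.e.\ $\eta(a)\meet\eta(b)=\eta(a)\join\eta(b)$, whence $\eta(a)=\eta(b)$. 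For (iii), $\rsf F_a\subseteq\rsf F_b$ gives $\rsf F_b=\rsf F_{a\meet b}$ from the lemma, so (ii) applied to $b$ and $a\meet b$ yields $\eta(b)=\eta(a\meet b)=\eta(a)\meet\eta(b)$, that is $\eta(b)\le\eta(a)$.
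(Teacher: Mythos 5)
Your proposal is correct and follows essentially the same route as the paper: part (i) by localizing to $[a,1]$, invoking \propref{prop:easyCase} there, and pulling $b\to a$ back into $\mathcal K(\rsf F)$ via $\mathcal K_{[a,1]}(\rsf F^{\geq a})=\mathcal K(\rsf F)^{\geq a}$; parts (ii) and (iii) by reducing to $a\meet b$ and $a\join b$ through the preceding lemma. Your extra care in checking that the relative negation in $[a,1]$ is $b\to a$ and that $\rsf F^{\geq a}$ is prime only makes explicit what the paper leaves implicit; the minor variations in which half of the lemma you invoke for (ii) and (iii) do not change the argument.
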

\begin{proof}
	\begin{enumerate}[(i)]
		\item Given $a\le b$ and $\rsf F_a=\rsf F_b$ we also have 
		$\rsf F_a^{\geq a}=\rsf F_b^{\geq a}$. Now we can apply
		\propref{prop:easyCase}
		on $[a,1]$ to get $\eta_{\mathcal K_{[a,1]}}(\rsf F)(a)=
		\eta_{\mathcal K_{[a,1]}}(\rsf F)(b)$
		and so $b\to a\in \mathcal K_{[a,1]}(\rsf F^{\geq a})= \mathcal
		K(\rsf F)^{\geq a}\subseteq\mathcal K(\rsf F)$ and so
		$\eta(a)=\eta(b)$.
		
		\item As $\rsf F_a=\rsf F_{a\meet b}=\rsf F_b$ we can take part (i)
		with $a\meet b$ and $a$ and then with $a\meet b$ and $b$ to get
		$\eta(a)=\eta(a\meet b)=\eta(b)$.
		
		\item As $\rsf F_a=\rsf F_{a\join b}$ we have $\eta(a)= \eta(a\join
b)= \eta(a)\join\eta(b)$ and 
		so $\eta(a)\geq \eta(b)$.
	\end{enumerate}
\end{proof}

We note that if $P$ is a prime implication filter then 
$P_{a}=\eta_{P}^{-1}[][a]_{P}, \one]]$ for all $a\notin P$ -- as
$x\in P_{a}$ iff $x\to a\notin P$ iff $[x]\to[a]<\one$ iff 
$[a]<[x]$. Thus the set of all $\Set{P_{a} | a\in\mathcal L}$ is a 
copy of $\mathcal L/P$. 

\section{Kernels and Joins}
There are a number of other naturally occurring filters whose kernel
we can compute.
\begin{defn}
	Let $\rsf F$ be any filter, $P$ any implication filter. Then
	$$
		J(\rsf F,P)=\eta_P^{-1}[\rsf F/P].
	$$
\end{defn}

It is clear that both $\rsf F$ and $P$ are contained in $J(\rsf
F,P)$. 
\begin{prop}
	If $P$ and $Q$ are two implication filters then 
	$$
		P\join Q= J(P,Q).
	$$
\end{prop}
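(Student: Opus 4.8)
The plan is to prove the two inclusions separately, exploiting the fact that both $P$ and $Q$ sit inside $J(P,Q)$ (as already remarked, since $J(P,Q)$ contains both its filter argument $P$ and its implication-filter argument $Q$) together with the defining closure of implication filters under modus ponens.

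First I would establish $P\join Q\subseteq J(P,Q)$. Since $P\join Q$ is by definition the smallest implication filter containing $P\cup Q$, and we already know $P,Q\subseteq J(P,Q)$, it suffices to check that $J(P,Q)=\eta_Q^{-1}[P/Q]$ is itself an implication filter. This follows because $\eta_Q$ is a surjective MV-homomorphism: the image $P/Q$ of the implication filter $P$ is an implication filter of $\mathcal L/Q$ (using that implication filters are $\otimes$-closed and upward closed, so that from $\eta_Q(a),\eta_Q(a)\to\eta_Q(b)\in P/Q$ one gets $\eta_Q(a)\otimes(\eta_Q(a)\to\eta_Q(b))\le\eta_Q(b)$ and hence $\eta_Q(b)\in P/Q$), and the preimage of an implication filter under a homomorphism is again an implication filter.

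Then I would prove the reverse inclusion $J(P,Q)\subseteq P\join Q$, where the real content lies, though it is still short. Let $h\in J(P,Q)$. By definition $\eta_Q(h)\in P/Q$, so there is some $p\in P$ with $\eta_Q(h)=\eta_Q(p)$; that is, $h$ and $p$ are congruent modulo $Q$, which gives $p\to h\in Q$. Now $p\in P\subseteq P\join Q$ and $p\to h\in Q\subseteq P\join Q$, so by closure of the implication filter $P\join Q$ under modus ponens we conclude $h\in P\join Q$.

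The main obstacle — really the only point demanding care — is the verification that $J(P,Q)$ is an implication filter, which is what licenses the first inclusion; the reverse inclusion is then an immediate application of modus ponens once the congruence relation $p\to h\in Q$ has been extracted. Combining the two inclusions yields $P\join Q=J(P,Q)$.
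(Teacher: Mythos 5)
Your proof is correct and follows essentially the same route as the paper: both extract $p\in P$ congruent to $h$ modulo $Q$, observe $p\to h\in Q$, and combine inside $P\join Q$ (the paper writes $h=p\otimes(p\to h)$ after reducing to $h\le p$, which is just your modus ponens step made explicit). You are in fact slightly more careful than the paper on the easy inclusion, where the paper leaves implicit the verification that $J(P,Q)$ is an implication filter.
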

\begin{proof}
	As noted above we have $P\join Q\subseteq J(P,Q)$.
	
	Let $x\in J(P,Q)$. Then there is some $p\in P$ with $x\sim p\mod Q$.
As this implies
	$x=x\join x\sim p\join x\mod Q$ we will assume that $x\le p$ and so
$p\to x\in Q$. 
	Now we have 
	$x= x\meet p= p\otimes(p\to x)\in P\join Q$. 
\end{proof}
Thus $J$ is a generalization of join of filters -- one of several
possible generalizations.

We will consider the case that $P$ is a prime implication filter.
$J(\rsf F,P)$ is the preimage of a prime filter in $\mathcal
L/P$ and so must be prime. Therefore $\mathcal K(J(\rsf F,P))$ is
also a prime implication filter that must contain $P$. 

We now show that $\mathcal K(\rsf F)$ is contained in $\mathcal K(J(\rsf F,P))$.
\begin{prop}
	$\mathcal K(\rsf F)\subseteq\mathcal K(J(\rsf F,P))$.
\end{prop}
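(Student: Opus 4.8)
The plan is to avoid working directly with the defining quantifier over complements of $\rsf F$ and instead use the multiplicative description of the kernel supplied by \thmref{thm:OTClosed}, namely $\mathcal K(\rsf G)=\Set{ z | \forall g\in\rsf G\ g\otimes z\in\rsf G}$ for any order filter $\rsf G$. Applying this simultaneously to $\rsf F$ and to $J(\rsf F,P)$ turns the desired inclusion into a single transport computation across the homomorphism $\eta_P$, since membership in $J(\rsf F,P)=\eta_P^{-1}[\rsf F/P]$ is detected precisely by whether the $\eta_P$-image lands in $\rsf F/P$.

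First I would fix $z\in\mathcal K(\rsf F)$; by \thmref{thm:OTClosed} this means $f\otimes z\in\rsf F$ for every $f\in\rsf F$. To conclude $z\in\mathcal K(J(\rsf F,P))$, again via \thmref{thm:OTClosed}, I must show $g\otimes z\in J(\rsf F,P)$ for every $g\in J(\rsf F,P)$. So I would take an arbitrary such $g$; because $\eta_P(g)\in\rsf F/P$, there is some $f\in\rsf F$ with $\eta_P(g)=\eta_P(f)$.

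The key step is to push the product through the epimorphism. Since $\eta_P$ preserves $\otimes$, we have $\eta_P(g\otimes z)=\eta_P(g)\otimes\eta_P(z)=\eta_P(f)\otimes\eta_P(z)=\eta_P(f\otimes z)$. Now $z\in\mathcal K(\rsf F)$ and $f\in\rsf F$ give $f\otimes z\in\rsf F$, whence $\eta_P(f\otimes z)\in\rsf F/P$, so $\eta_P(g\otimes z)\in\rsf F/P$ and therefore $g\otimes z\in J(\rsf F,P)$. Applying \thmref{thm:OTClosed} to $J(\rsf F,P)$ this is exactly the statement $z\in\mathcal K(J(\rsf F,P))$.

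There is no genuine obstacle once the $\otimes$-characterization is invoked; the only points requiring care are that the replacement of $g$ by a representative $f\in\rsf F$ takes place at the level of $\eta_P$-classes rather than on the nose, and that it is multiplicative closure, not merely order closure, that is needed — which is precisely why \thmref{thm:OTClosed} rather than the raw definition of $\mathcal K$ is the natural tool. I would also note in passing that primeness of $P$ is never used, so the inclusion in fact holds for every implication filter $P$.
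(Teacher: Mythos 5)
Your argument is correct and is essentially the paper's own proof: both fix $z\in\mathcal K(\rsf F)$, replace an arbitrary $g\in J(\rsf F,P)$ by an $\eta_P$-equivalent $f\in\rsf F$, push $\otimes z$ through the congruence, and invoke the characterization of \thmref{thm:OTClosed}. Your observation that primeness of $P$ is not needed is also consistent with the paper's argument.
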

\begin{proof}
	Let $x\in J$ and $z\in\mathcal K(\rsf F)$. Then there is some
$f\in\rsf F$ with 
	$f\sim x\mod P$ and so $z\otimes f\sim z\otimes x$. As $z\otimes
f\in\rsf F$ we must have 
	$z\otimes x\in J$. 
\end{proof}

Hence
$$
	\mathcal K(\rsf F)\join P\subseteq\mathcal K(J(\rsf F,P)).
$$
We aim to show that these two sets are actually equal,  provided 
$J(\rsf F,P)\not=\mathcal L$. 

\begin{lem}
	Let $\rsf F$ be any order filter and $Q$ any implication filter. Then
	$$
		\eta_Q^{-1}[\mathcal K(\rsf F/Q)]=\mathcal K(\eta_Q^{-1}[\rsf F/Q]).
	$$
\end{lem}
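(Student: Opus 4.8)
The plan is to reduce everything to the $\otimes$-closure description of the kernel from \thmref{thm:OTClosed}, namely that for any order filter $\rsf G$ one has $\mathcal K(\rsf G)=\Set{z | \forall g\in\rsf G\ g\otimes z\in\rsf G}$, and then to transport membership back and forth across the homomorphism $\eta_Q$. Writing $\rsf G=\eta_Q^{-1}[\rsf F/Q]$, the first thing I would do, before invoking the characterization, is check that $\rsf F/Q$ really is an order filter in $\mathcal L/Q$: it is the image $\eta_Q[\rsf F]$, and if $\eta_Q(f)\le\bar y$ with $f\in\rsf F$ then $\bar y=\eta_Q(f\join y)$ for any representative $y$ of $\bar y$, with $f\join y\in\rsf F$, so $\bar y\in\rsf F/Q$. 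Thus \thmref{thm:OTClosed} is available both for $\mathcal K(\rsf F/Q)$ inside $\mathcal L/Q$ and for $\mathcal K(\rsf G)$ inside $\mathcal L$. I would also record the two structural facts I need: $\rsf F\subseteq\rsf G$ and $\eta_Q[\rsf G]=\rsf F/Q$.

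For the inclusion $\eta_Q^{-1}[\mathcal K(\rsf F/Q)]\subseteq\mathcal K(\rsf G)$, I would take $z$ with $\eta_Q(z)\in\mathcal K(\rsf F/Q)$ and an arbitrary $g\in\rsf G$. Then $\eta_Q(g)\in\rsf F/Q$, so by \thmref{thm:OTClosed} in the quotient we get $\eta_Q(g)\otimes\eta_Q(z)\in\rsf F/Q$; since $\eta_Q$ is a homomorphism this product equals $\eta_Q(g\otimes z)$, whence $g\otimes z\in\rsf G$. As $g$ was arbitrary, $z\in\mathcal K(\rsf G)$.

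For the reverse inclusion I would take $z\in\mathcal K(\rsf G)$ and any $\bar f\in\rsf F/Q$, say $\bar f=\eta_Q(f)$ with $f\in\rsf F$. Because $\rsf F\subseteq\rsf G$ we have $f\in\rsf G$, so $f\otimes z\in\rsf G$ and hence $\bar f\otimes\eta_Q(z)=\eta_Q(f\otimes z)\in\rsf F/Q$. Since $\bar f$ ranges over all of $\rsf F/Q$, this shows $\eta_Q(z)\in\mathcal K(\rsf F/Q)$, i.e. $z\in\eta_Q^{-1}[\mathcal K(\rsf F/Q)]$.

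The argument is really just the observation that $\mathcal K$, once rewritten as $\otimes$-closure, commutes with preimage under a surjective homomorphism. There is no serious obstacle; the only point that needs genuine care is confirming that $\rsf F/Q$ is an order filter, so that the $\otimes$-characterization is legitimately available on the quotient side, together with the routine use of $\eta_Q(g\otimes z)=\eta_Q(g)\otimes\eta_Q(z)$.
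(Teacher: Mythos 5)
Your proof is correct and is essentially the paper's own argument: both directions reduce the two kernels to the $\otimes$-closure characterization of \thmref{thm:OTClosed} and push products through the homomorphism $\eta_Q$, with only cosmetic differences (you explicitly verify that $\rsf F/Q$ is an order filter and use $\rsf F\subseteq\eta_Q^{-1}[\rsf F/Q]$ where the paper just picks an arbitrary preimage representative). Nothing further is needed.
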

\begin{proof}
	Let $[x]\in\mathcal K(\rsf F/Q)$ and $y\in\eta_Q^{-1}[\rsf F/Q]$. Then 
	$[y]\in\rsf F/Q$ so that $[x\otimes y]= [x]\otimes[y]\in\rsf F/Q$. 
	Hence $x\otimes y\in \eta_Q^{-1}[\rsf F/Q]$. As this holds for all such $y$ we have 
	$x\in\mathcal K(\eta_Q^{-1}[\rsf F/Q])$.
	
	Conversely, if $x\in \mathcal K(\eta_Q^{-1}[\rsf F/Q])$ and $[y]\in\rsf F/Q$ then
	$y\in\eta_Q^{-1}[\rsf F/Q]$ so that $x\otimes y\in\eta_Q^{-1}[\rsf F/Q]$
	and hence $[x]\otimes[y]= [x\otimes y]\in\rsf F/Q$. Hence
	$[x]\in\mathcal K(\rsf F/Q)$.
\end{proof}

There are two cases to our problem:
\begin{enumerate}[{Case }1:]
	\item $P\subseteq\mathcal K(\rsf F)$; and 
	\item $P\not\subset\mathcal K(\rsf F)$.
\end{enumerate}

\subsection{Case 1}
If $P\subseteq\mathcal K(\rsf F)$ then $J(\rsf F, 
P)=\eta_{P}^{-1}[\rsf F/P]= \rsf F$ by \thmref{thm:inclK}.

\subsection{Case 2}
Let $P'=P\join \mathcal K(\rsf F)$. This is strictly bigger than $\mathcal K(\rsf F)$
and so $\eta_{P'}^{-1}[\rsf F/P']\not=\rsf F$. This means there is a coset $[\ell]$ with
$\rsf F\cap[\ell]\not=\emptyset\not=[\ell]\setminus\rsf F$ and so 
$\rsf F/P'$ is actually principal, with lower endpoint 
$[\ell]$. 

As $J\not=\mathcal L$ we know that $[\ell]>[0]$ and 
the kernel of a nontrivial principal filter in a linear order is $\Set\one$. Hence
$$
	\mathcal K(\eta_{P'}^{-1}[\rsf F/P'])= \eta_{P'}^{-1}[\mathcal K(\rsf F/P')]= \eta_{P'}[\Set\one]= P'.
$$

Now we recall the natural isomorphism between 
$\left(\mathcal L/P\right)/\left(P'/P\right)$ and $\mathcal L/P'$ 
that restricts to an isomorphism on $\rsf F$. We will use this to 
establish our result.

\begin{lem}
    $P'/P\subseteq\mathcal K(\rsf F/P)$.
\end{lem}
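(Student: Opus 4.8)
The plan is to obtain the inclusion directly from the commutation lemma for $\mathcal K$ under quotients, rather than by any fresh element-chasing. First I would apply the preceding lemma — the identity $\eta_Q^{-1}[\mathcal K(\rsf F/Q)]=\mathcal K(\eta_Q^{-1}[\rsf F/Q])$ — in the special case $Q=P$. Since $J(\rsf F,P)=\eta_P^{-1}[\rsf F/P]$ by definition, this yields
$$
\mathcal K(J(\rsf F,P))=\mathcal K(\eta_P^{-1}[\rsf F/P])=\eta_P^{-1}[\mathcal K(\rsf F/P)].
$$
Thus the kernel of the large filter $J(\rsf F,P)$ is exactly the full $\eta_P$-preimage of the object $\mathcal K(\rsf F/P)$ that appears on the right-hand side of the statement I want to prove.

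Next I would invoke the inclusion already established above, namely that $P'=\mathcal K(\rsf F)\join P\subseteq\mathcal K(J(\rsf F,P))$. Combining this with the displayed identity gives $P'\subseteq\eta_P^{-1}[\mathcal K(\rsf F/P)]$. Since $P\subseteq P'$, the image $P'/P=\eta_P[P']$ is exactly the localization of this inclusion under $\eta_P$: every $x\in P'$ satisfies $\eta_P(x)\in\mathcal K(\rsf F/P)$, so $\eta_P[P']\subseteq\mathcal K(\rsf F/P)$, which is precisely $P'/P\subseteq\mathcal K(\rsf F/P)$.

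I expect no serious obstacle here; the real work has all been front-loaded into the commutation lemma and into the earlier proof that $\mathcal K(\rsf F)\subseteq\mathcal K(J(\rsf F,P))$. The only creative choice is the specialization $Q=P$, which I would state explicitly so the reader sees that $J(\rsf F,P)$ is literally the object $\eta_P^{-1}[\rsf F/P]$ to which the lemma applies. The one point deserving a word of care is purely formal: the passage from the preimage inclusion $P'\subseteq\eta_P^{-1}[\mathcal K(\rsf F/P)]$ to the image inclusion $\eta_P[P']\subseteq\mathcal K(\rsf F/P)$. This step is valid for any surjection and any target set and does not require $\mathcal K(\rsf F/P)$ to be $\eta_P$-saturated, so no appeal to the Case~2 structure of $P'$ (its being principal over $\rsf F$, etc.) is needed for this particular lemma.
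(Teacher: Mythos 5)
Your proof is correct, and it is not the paper's argument. The paper proves this lemma by a direct element computation: it takes $[q]\in P'/P$, uses the description of $P'=P\join\mathcal K(\rsf F)$ as $J(\mathcal K(\rsf F),P)$ to write $q\sim k\bmod P$ for some $k\in\mathcal K(\rsf F)$, and then checks $[q]\otimes[f]=[k\otimes f]\in\rsf F/P$ for every $[f]\in\rsf F/P$, invoking the $\otimes$-characterization of the kernel (\thmref{thm:OTClosed}). You instead specialize the commutation lemma to $Q=P$ to get $\mathcal K(J(\rsf F,P))=\eta_P^{-1}[\mathcal K(\rsf F/P)]$, feed in the already-established inclusion $P'=\mathcal K(\rsf F)\join P\subseteq\mathcal K(J(\rsf F,P))$, and push forward along the surjection $\eta_P$. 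Both routes are sound and neither is circular: the two ingredients you use are proved in the paper before this lemma and do not depend on it. What your version buys is economy and transparency of logical structure --- the lemma is exhibited as nothing more than the $\eta_P$-image of an inclusion already proved upstairs, and you avoid repeating the element-chase that the paper in effect performs twice (once for $\mathcal K(\rsf F)\subseteq\mathcal K(J(\rsf F,P))$ and once here); you also sidestep any need to represent elements of $P'$ via $J(\mathcal K(\rsf F),P)$. What the paper's version buys is self-containedness at the element level, which keeps the lemma readable in isolation. Your closing remark is also right: passing from $P'\subseteq\eta_P^{-1}[\mathcal K(\rsf F/P)]$ to $\eta_P[P']\subseteq\mathcal K(\rsf F/P)$ holds for any map and any target set, so no saturation or Case~2 structure is needed.
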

\begin{proof}
    Let $[q]\in P'/P$ and $[f]\in\rsf F/P$ for some $f\in \rsf F$. 
    Then we have $q\sim k\meet p\sim k\mod P$ for some $k\in\mathcal 
    K(\rsf F)$ and so 
    $[q]\otimes[f]= [k]\otimes[f]= [k\otimes f]\in\rsf F/P$.
\end{proof}

It follows from this lemma that
$$
\eta_{P'/P}^{-1}[\left(\rsf F/P\right)/\left(P'/P\right)]= \rsf F/P
$$
but we know that $\left(\rsf F/P\right)/\left(P'/P\right)$ is 
essentially $\rsf F/P'$ and so principal with kernel $\Set\one$. 
Hence we have 
$$
\mathcal K_{\mathcal L/P}(\rsf F/P)=
\mathcal K_{\mathcal L/P}(\eta_{P'/P}^{-1}[\left(\rsf 
F/P\right)/\left(P'/P\right)])=
\eta_{P'/P}^{-1}[\mathcal K_{\mathcal L/P'}(\rsf F/P')]= P'/P
$$
and finally we have 
\begin{align*}
    \mathcal K(J(\rsf F, P)) & =\mathcal K(\eta_{P}^{-1}[\rsf F/P])  \\
     & = \eta_{P}^{-1}[\mathcal K(\rsf F/P)]  \\
     & = \eta_{P}^{-1}[P'/P]\\
     & = P'.
\end{align*}

The above results prove the theorem.
\begin{thm}
    Let $\rsf F$ be any prime lattice filter, and $P$ any prime 
    implication filter. Then
    $$
    \mathcal K(J(\rsf F, P))=\mathcal K(\rsf F)\join P.
    $$
\end{thm}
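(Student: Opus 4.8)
The plan is to leverage the inclusion already in hand, namely $\mathcal K(\rsf F)\join P\subseteq\mathcal K(J(\rsf F,P))$, so that only the reverse containment remains, and to obtain it through the case split flagged above. I abbreviate $J=J(\rsf F,P)$ and take it proper (the provision $J\neq\mathcal L$), since otherwise the relevant quotient degenerates and the right-hand side need not fill all of $\mathcal L$.

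In Case~1, where $P\subseteq\mathcal K(\rsf F)$, the join $\mathcal K(\rsf F)\join P$ collapses to $\mathcal K(\rsf F)$, and \thmref{thm:inclK} gives $J=\eta_P^{-1}[\rsf F/P]=\rsf F$. Applying $\mathcal K$ to both sides yields $\mathcal K(J)=\mathcal K(\rsf F)=\mathcal K(\rsf F)\join P$ at once, so this case needs nothing beyond the cited theorem.

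The work is in Case~2, $P\not\subseteq\mathcal K(\rsf F)$. Writing $P'=P\join\mathcal K(\rsf F)$, the goal becomes $\mathcal K(J)=P'$, since $P'=\mathcal K(\rsf F)\join P$ by commutativity of join. I would first pass to $\mathcal L/P$, which is a chain because $P$ is a prime implication filter. The kernel--preimage lemma $\eta_Q^{-1}[\mathcal K(\rsf F/Q)]=\mathcal K(\eta_Q^{-1}[\rsf F/Q])$, taken with $Q=P$, gives
$$
\mathcal K(J)=\mathcal K(\eta_P^{-1}[\rsf F/P])=\eta_P^{-1}[\mathcal K_{\mathcal L/P}(\rsf F/P)],
$$
so it suffices to show $\mathcal K_{\mathcal L/P}(\rsf F/P)=P'/P$, whereupon $\eta_P^{-1}[P'/P]=P'$ finishes the case.

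The crux, and the step I expect to be the main obstacle, is the identity $\mathcal K_{\mathcal L/P}(\rsf F/P)=P'/P$. I would establish it inside $\mathcal L/P$ via the iterated--quotient isomorphism between $(\mathcal L/P)/(P'/P)$ and $\mathcal L/P'$, under which $(\rsf F/P)/(P'/P)$ corresponds to $\rsf F/P'$. The preceding lemma provides $P'/P\subseteq\mathcal K(\rsf F/P)$, so \thmref{thm:inclK} applied within $\mathcal L/P$ gives $\eta_{P'/P}^{-1}[(\rsf F/P)/(P'/P)]=\rsf F/P$; running the kernel--preimage lemma once more and invoking that $\rsf F/P'$ is a nontrivial principal filter in the chain $\mathcal L/P'$, hence of kernel $\Set\one$, collapses the computation to $\mathcal K_{\mathcal L/P}(\rsf F/P)=P'/P$. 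The delicate point is verifying that $\rsf F/P'$ is genuinely principal and nontrivial: this is exactly where both the strictness $\mathcal K(\rsf F)\subsetneq P'$ and the hypothesis $J\neq\mathcal L$, forcing the lower endpoint $[\ell]>[0]$, enter. Combining the two cases then completes the proof.
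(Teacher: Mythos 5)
Your proposal is correct and follows essentially the same route as the paper: the same case split on whether $P\subseteq\mathcal K(\rsf F)$, the same use of \thmref{thm:inclK} in Case 1, and in Case 2 the same passage through the kernel--preimage lemma, the iterated-quotient isomorphism $(\mathcal L/P)/(P'/P)\cong\mathcal L/P'$, and the observation that $\rsf F/P'$ is a nontrivial principal filter in a chain and hence has kernel $\Set\one$. You also correctly flag the implicit proviso $J(\rsf F,P)\neq\mathcal L$ that the paper states in the surrounding discussion but omits from the theorem itself.
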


\subsection{Arbitrary Joins}

It is well known that every filter is the intersection of the prime 
filters that contain it. We can refine this somewhat and get an 
intersection to $\rsf F$ of a family of prime filters whose kernels intersect to 
$\mathcal K(\rsf F)$.

\begin{prop}
    Let $\mu S$ be the set of minimal prime filters. Then
    $$
	\rsf F=\bigcap_{m\in\mu S}J(\rsf F,  m).
    $$
\end{prop}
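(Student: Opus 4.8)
The plan is to prove the two inclusions separately. The inclusion $\rsf F\subseteq\bigcap_{m\in\mu S}J(\rsf F,m)$ is immediate, since each $J(\rsf F,m)=\eta_m^{-1}[\rsf F/m]$ contains $\rsf F$ as observed right after the definition of $J$. So the real content is the reverse inclusion: if $x\notin\rsf F$, then I must produce a single minimal prime filter $m$ with $x\notin J(\rsf F,m)$, i.e. with $[x]_m\notin\rsf F/m$, equivalently $\eta_m^{-1}[\rsf F/m]$ still excluding $x$.

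First I would reduce $x\notin J(\rsf F,m)$ to a concrete separation condition. Unwinding the definition, $x\in J(\rsf F,m)$ means there is some $f\in\rsf F$ with $f\sim x\bmod m$; since we may replace $f$ by $f\join x$ (which is also in $\rsf F$) the relevant obstruction is whether the coset of $x$ meets $\rsf F$. Thus $x\notin J(\rsf F,m)$ precisely when no element $m$-equivalent to $x$ lies in $\rsf F$; using that $m$ is an implication filter this amounts to asking that $f\to x\notin m$ for every $f\in\rsf F$ with $f\ge x$, or more cleanly that the principal obstruction $\bigwedge\{f\to x:f\in\rsf F\}$ (morally) avoid $m$. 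The key point I want is: $x\notin J(\rsf F,m)$ iff $m\subseteq\mathcal K(\rsf F)$ fails to absorb $x$ into $\rsf F$ — so by \thmref{thm:inclK}, choosing $m\subseteq\mathcal K(\rsf F)$ guarantees $J(\rsf F,m)=\rsf F$ and hence $x\notin J(\rsf F,m)$.

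This suggests the cleanest route: since $\mathcal K(\rsf F)$ is an implication filter (by the first theorem of section~2) and $x\notin\rsf F\supseteq\mathcal K(\rsf F)$, I would use the fact that every filter is the intersection of the prime filters containing it, applied inside $\mathcal K(\rsf F)$ or via a minimal-prime refinement, to find a minimal prime implication filter $m\subseteq\mathcal K(\rsf F)$. For such an $m$, Case~1 of the preceding analysis (namely $P\subseteq\mathcal K(\rsf F)\Rightarrow J(\rsf F,P)=\rsf F$) gives $J(\rsf F,m)=\rsf F$, so $x\notin J(\rsf F,m)$, completing the reverse inclusion. The existence of a minimal prime $m$ below $\mathcal K(\rsf F)$ would follow from a standard Zorn's-lemma argument on the downward-directed family of prime implication filters contained in $\mathcal K(\rsf F)$.

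The main obstacle I anticipate is exactly this last existence step: I must ensure that the minimal prime filters in $\mu S$ are rich enough to sit below $\mathcal K(\rsf F)$, and that their intersection property is compatible with the kernel. Concretely, I need every prime implication filter containing $m$ to be controllable, and I must verify that the intersection $\bigcap_{m\in\mu S}\mathcal K(\rsf F)$-dominated terms really do recover $\rsf F$ rather than something larger; the delicate part is handling the case where $\mathcal K(\rsf F)$ itself is not prime, so that no single minimal prime lies below it and I instead need $x$ to be separated by \emph{some} minimal prime whose join with $\rsf F$ still omits $x$. Resolving this likely requires invoking primeness of $\rsf F$ together with \thmref{thm:inclK} to convert the separation of $x$ from $\rsf F$ into the separation of a coset in an appropriate quotient, where minimal primes are guaranteed to exist by a standard prime-filter existence lemma.
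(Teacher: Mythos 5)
Your first inclusion and your appeal to \thmref{thm:inclK} (namely that $m\subseteq\mathcal K(\rsf F)$ forces $J(\rsf F,m)=\rsf F$) are both sound, but the pivot of your argument --- the existence of a minimal prime implication filter $m$ contained in $\mathcal K(\rsf F)$ --- is exactly the step that fails in general, and you flag it without repairing it. Zorn's lemma only yields a minimal element of the family of prime implication filters contained in $\mathcal K(\rsf F)$ if that family is nonempty. When $\rsf F$ is not prime, $\mathcal K(\rsf F)$ need not be prime and can be as small as $\Set\one$ (e.g.\ $\rsf F=[p,1]$ in a product of two chains), in which case \emph{no} prime implication filter lies below it. Since the proposition is placed under the heading of refining ``every filter is the intersection of the prime filters that contain it,'' and the corollary that follows explicitly takes $\rsf F$ to be any filter, this is a genuine gap rather than a technicality. (If $\rsf F$ is assumed prime your argument does close, and in fact proves something stronger: a single $m\subseteq\mathcal K(\rsf F)$ already gives $J(\rsf F,m)=\rsf F$.)

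The paper's proof sidesteps the difficulty by not trying to separate a point $x\notin\rsf F$ from $\rsf F$ with a minimal prime chosen below $\mathcal K(\rsf F)$. Instead, for each prime filter $P\supseteq\rsf F$ it chooses a minimal prime $m\subseteq\mathcal K(P)$ --- this always exists because $\mathcal K(P)$ \emph{is} a prime implication filter when $P$ is prime --- and then uses monotonicity together with \thmref{thm:inclK} to get $J(\rsf F,m)=\eta_m^{-1}[\rsf F/m]\subseteq\eta_m^{-1}[P/m]=P$. Intersecting over all $m\in\mu S$ therefore lands inside $\bigcap\Set{P | \rsf F\subseteq P\text{ and }P\text{ prime}}=\rsf F$. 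This is precisely the ``convert the separation into an appropriate quotient'' move you gesture at in your final paragraph: one first separates $x$ from $\rsf F$ by a prime $P$, and only then picks the minimal prime, below $\mathcal K(P)$ rather than below $\mathcal K(\rsf F)$. If you replace your single-$m$ strategy with this two-step selection, your proof becomes correct for arbitrary filters.
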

\begin{proof}
    We know that each $J(\rsf F,  m)$ is a prime filter containing 
    $\rsf F$ and so $\rsf F\subseteq$RHS.
    
    Conversely,  if $P$ is any prime filter containing $\rsf F$ then 
    there is a minimal prime filter $m\subseteq \mathcal 
    K(P)\subseteq P$. Hence
    $J(\rsf F, m)= \eta_{m}^{-1}[\rsf F/m]\subseteq 
    \eta_{m}^{-1}[P/m]= P$. 
    
    Therefore the given intersection is contained in $\bigcap\Set{ P 
    | \rsf F\subseteq P\text{ and }P\text{ prime}}$.
\end{proof}

\begin{cor}
    Let $\rsf F$ be any filter. Then 
    $$
	\mathcal K(\rsf F)=\bigcap_{m\in\mu S}\mathcal K(J(\rsf F,  m)).
    $$
\end{cor}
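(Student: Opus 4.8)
The plan is to prove the two inclusions separately, and in both directions the engine is the multiplicative description of the kernel from \thmref{thm:OTClosed}, namely $\mathcal K(\rsf G)=\Set{z | \forall g\in\rsf G\ g\otimes z\in\rsf G}$, together with the identity $\rsf F=\bigcap_{m\in\mu S}J(\rsf F,m)$ furnished by the preceding proposition. Note that each minimal prime $m$ is a prime implication filter, so $J(\rsf F,m)$ is defined and the earlier results about $J$ apply with $P=m$.

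For the inclusion $\mathcal K(\rsf F)\subseteq\bigcap_{m}\mathcal K(J(\rsf F,m))$ there is essentially nothing new to do: the earlier proposition already gives $\mathcal K(\rsf F)\subseteq\mathcal K(J(\rsf F,m))$ for each single $m$, and intersecting over all $m\in\mu S$ delivers the claim. Crucially, I would only invoke this \emph{inclusion}, not the sharper equality $\mathcal K(J(\rsf F,P))=\mathcal K(\rsf F)\join P$, since the latter was proved under the extra hypothesis that $\rsf F$ be a prime lattice filter, whereas here $\rsf F$ is arbitrary.

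The reverse inclusion is where the argument does its work, but it remains short. I would take $z\in\bigcap_{m}\mathcal K(J(\rsf F,m))$ and an arbitrary $f\in\rsf F$. Since $\rsf F\subseteq J(\rsf F,m)$ for every $m$ (noted just after the definition of $J$), we have $f\in J(\rsf F,m)$; and since $z\in\mathcal K(J(\rsf F,m))$, the characterization of \thmref{thm:OTClosed} applied to $J(\rsf F,m)$ yields $f\otimes z\in J(\rsf F,m)$. As this holds for \emph{every} $m\in\mu S$, the identity $\rsf F=\bigcap_{m}J(\rsf F,m)$ forces $f\otimes z\in\rsf F$. Because $f\in\rsf F$ was arbitrary, a second application of \thmref{thm:OTClosed}, this time to $\rsf F$ itself, gives $z\in\mathcal K(\rsf F)$, completing the inclusion.

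I do not anticipate a serious obstacle: the proof is a formal consequence of the $\otimes$-characterization of $\mathcal K$ and the decomposition $\rsf F=\bigcap_m J(\rsf F,m)$. The only points demanding care are verifying that the members of $\mu S$ are legitimate implication filters so that $J(\rsf F,m)$ and the cited proposition are available, and resisting the temptation to route the reverse inclusion through the previous theorem's equality, which would improperly import the primeness hypothesis on $\rsf F$.
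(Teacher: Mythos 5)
Your proof is correct, but it takes a genuinely different route from the paper's. The paper's proof is a one-line reduction: it invokes the preceding theorem to rewrite $\mathcal K(J(\rsf F,m))=\mathcal K(\rsf F)\join m=J(\mathcal K(\rsf F),m)$ and then applies the proposition $\rsf G=\bigcap_{m\in\mu S}J(\rsf G,m)$ to the filter $\rsf G=\mathcal K(\rsf F)$, rather than to $\rsf F$ itself as you do. Your argument instead proves the two inclusions directly: the forward one by intersecting the already-established containments $\mathcal K(\rsf F)\subseteq\mathcal K(J(\rsf F,m))$, and the reverse one from the $\otimes$-characterization of the kernel in \thmref{thm:OTClosed} combined with the decomposition $\rsf F=\bigcap_{m}J(\rsf F,m)$. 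The trade-off is instructive: the paper's route is shorter given the machinery already built, but, as you correctly observe, the equality $\mathcal K(J(\rsf F,P))=\mathcal K(\rsf F)\join P$ was established only for $\rsf F$ a prime lattice filter, so the paper's proof of a corollary stated for ``any filter'' quietly imports a hypothesis it does not have. Your elementary two-inclusion argument avoids that theorem entirely and genuinely covers the stated level of generality, modulo the one prerequisite both proofs share, namely the proposition that $\rsf F$ is the intersection of the $J(\rsf F,m)$. Your version also dispenses with the intermediate identity $P\join Q=J(P,Q)$, which the paper's proof needs to pass from $\mathcal K(\rsf F)\join m$ to $J(\mathcal K(\rsf F),m)$.
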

\begin{proof}
    This is a special case of the proposition as 
    $\mathcal K(J(\rsf F,  m))= \mathcal K(\rsf F)\join m= J(\mathcal 
    K(\rsf F), m)$.
\end{proof}

\end{document}